\theoremstyle{plain}
\newtheorem{thm}{Theorem}[section]
\newtheorem*{thm*}{Theorem}
\newtheorem{prop}[thm]{Proposition}
\newtheorem{lemma}[thm]{Lemma}
\newtheorem{cor}[thm]{Corollary}
\theoremstyle{definition}
\theoremstyle{remark}
\newtheorem*{rmk}{Remark}
\newtheorem*{rmks}{Remarks}
\newcommand{\C}{\mathbb{C}}
\renewcommand{\H}{\mathbb{H}}
\newcommand{\Z}{\mathbb{Z}}
\newcommand{\T}{\mathbb{T}}
\newcommand{\Q}{\mathbb{Q}}
\newcommand{\N}{\mathbb{N}}
\newcommand{\R}{\mathbb{R}}
\newcommand{\slz}{{\text {\rm SL}}_2(\mathbb{Z})}
\newcommand{\re}{\textnormal{Re}}
\newcommand{\im}{\textnormal{Im}}
\newcommand{\vt}[1]{\left\lvert #1 \right\rvert}
\newcommand{\id}{\mathrm{id}}
\newcommand{\dm}{\mathrm{d}}
\newcommand{\Ec}{\mathcal{E}}
\newcommand{\Qc}{\mathcal{Q}}
\newcommand{\Fc}{\mathcal{F}}
\newcommand{\Ps}{\mathscr{P}}
\setlist{nosep}
\setlist{noitemsep}
\numberwithin{equation}{section}
\lstdefinelanguage{Sage}[]{Python}
{morekeywords={False,sage,True},sensitive=true}
\newcolumntype{M}[1]{>{\centering\arraybackslash}m{#1}}
\title[Central $L$-values and local polynomials]{Central $L$-values of newforms and local polynomials}
\author{Joshua Males}
\address{\textnormal{(unaffiliated)}}
\email{joshua.males@bristol.ac.uk}
\author{Andreas Mono}
\address{\textnormal{\textit{Current address:}} Department of Mathematics, Vanderbilt University, 1326 Stevenson Center, Nashville, TN 37240, USA \newline \indent \textnormal{\textit{Former address:}} Department of Mathematics and Computer Science, Division of Mathematics, University of Cologne, Weyertal 86--90, 50931 Cologne, Germany}
\email{andreas.mono@vanderbilt.edu}
\author{Larry Rolen}
\address{Department of Mathematics, Vanderbilt University, 1326 Stevenson Center, Nashville, TN 37240, USA}
\email{larry.rolen@vanderbilt.edu}
\author{Ian Wagner}
\address{\textnormal{(unaffiliated)}}
\email{ianwagner11@gmail.com}
\begin{document}

\begin{abstract}
In this paper, we characterize the vanishing of twisted central $L$-values attached to newforms of square-free level in terms of certain polynomials of quadratic forms introduced by Zagier and the action of finitely many Hecke operators thereon. To be more precise, we establish that a twisted central $L$-value attached to a newform vanishes if and only if a certain explicitly computable polynomial is constant. We describe these constants explicitly in two different ways. One of the descriptions involves the generalized Hurwitz class numbers, which were introduced by Pei and Wang in $2003$. We provide some numerical examples and conclude by offering some questions for future work.
\end{abstract}

\subjclass[2020]{11F67 (Primary); 11F11, 11F25, 11F37 (Secondary)}

\keywords{Hecke operators, $L$-values, Maass forms, Modular forms}

\maketitle

\section{Introduction and statement of results}

\subsection{Motivation and previous work}

In his study of the Doi--Naganuma lift from modular forms to Hilbert modular forms \cite{zagier75}, Zagier defined the special functions 
\[
f_{k,D}(z)\coloneqq\sum_{[a,b,c]\in\mathcal Q_D}\frac{1}{(az^2+bz+c)^k}
.
\]
Here, for any $2\leq k\in\N$ and any discriminant $D>0$, $\mathcal Q_D$ is the set of integral binary quadratic forms of discriminant $D$, and the corresponding $f_{k,D}$ is modular on $\slz$ with weight $2k$. These functions are also Poincar\'e series \cite{IOS}, but with respect to Petersson's {hyperbolic expansions} of modular forms (see also Katok's deep study of such functions \cite{katok}). 
The significance of these forms was later amplified by the work of Kohnen \cite{Kohnen} and Kohnen--Zagier \cites{koza81, koza84}. To describe this, we need a slightly decorated generalization.
For discriminants $D, D_0 \equiv 0, 1 \pmod{4}$ and $DD_0 >0$, define
\begin{equation*}
f_{k,N,D,D_0}(z) \coloneqq \sum_{Q \in \mathcal{Q}_{N,DD_0}} \chi_{D_0}(Q) Q(z,1)^{-k},
\end{equation*}
where $\mathcal{Q}_{N,DD_0} \coloneqq \{[a,b,c] \in \mathcal{Q}_{DD_0} \colon N\mid a\}$ and $\chi_{D_0}$ is the so-called {\it genus character} from \cite{grokoza}.
For $k\geq 2$, $f_{k,N,D,D_0}$ is a cusp form in $S_{2k}(N)$. (This remains true for $k=1$ and $N$ cubefree, see Kohnen \cite{Kohnen}.)

Kohnen showed that a certain two-variable generating function of these forms has a representation in terms of classical half-integral weight Poincar\'e series as a function in the other variable. Thus, the two-variable generating function is modular in both variables, but with different weights. Kohnen used this observation to produce a kernel function of the famous Shimura \cite{shim} and Shintani \cite{shin} lifts. 

Kohnen and Zagier used this connection to study central $L$-values of modular forms. Specifically, this allowed for a more explicit interpretation of very general work of Waldspurger \cite{waldspurger}. For a suitable Hecke eigenform $f$ of even integral weight $2k$, one can detect the vanishing of, and give exact formulas for, twisted central $L$-values of $f$. Given such an $f$, there exists a cusp form 
$
g(z)=\sum_{n\geq1}a_g(n)q^n, \ q \coloneqq e^{2 \pi i z},
$
of weight $k+1/2$ such that for fundamental discriminants $D$ with $(-1)^k D > 0$, the Fourier coefficients of $g$ at index $D$ gives the square root of the central twisted $L$-value of $f$ up to a non-zero constant (see \cite{Kohnen}*{Corollary 1}:
\begin{align*}
a_g(\vt{D})^2 = C(f,g,k,N,D) L(f\otimes\chi_{D},k).
\end{align*}

Central $L$-values have a long history, tied to essential problems in number theory and arithmetic geometry, particularly thanks to the Birch and Swinnerton--Dyer Conjecture, Bloch--Kato Conjecture, and others. The work of Kohnen and Zagier allows one to detect whether a twisted central $L$-value vanishes, in the case of elliptic curves, whether the curve has (assuming BSD) a point of infinite order. The best known progress towards BSD, which allows one to determine the vanishing of central twisted first $L$-derivatives of elliptic curves, was given by studying heights of Heegner points by Gross, Kohnen, and Zagier \cite{grokoza}.

This was reinterpreted by Bruinier and Ono \cite{BruOnoAnnals} in the context of {\it harmonic Maass forms}. Essentially, these are functions on the upper half-plane which transform like modular forms, but instead of being holomorphic, are merely required to be in the kernel of a weighted Laplacian operator. This forces them to be real-analytic, and to split into two parts. These are the {\it holomorphic part}, which has an ordinary $q$-series expansion, and the {\it non-holomorphic part}, which has an expansion in terms of incomplete gamma functions. What Bruinier and Ono showed is that given a weight 2 newform attached to a rational elliptic curve $E$, there is a harmonic Maass form of weight $1/2$ whose holomorphic part coefficients determine the vanishing of twisted central $L$-derivatives of $E$, and whose non-holomorphic part coefficients determine the vanishing of its twisted central $L$-values.

A key aspect of the theory of harmonic Maass forms is the action of various differential operators. Particularly important is the $\xi_k$ operator of Bruinier and Funke \cite{BruinierFunke}, given by 
\begin{align} \label{eq:xidef}
f \coloneqq \xi_k(F)\coloneqq2i\im(z)^k\overline{\frac{\partial F}{\partial \overline{z}}}.
\end{align}
This is essentially a different normalization of the classical Maass lowering operator. The salient features of this operator is that it maps harmonic Maass forms of negative weight $k$ to holomorphic cusp forms\footnote{This is no longer true if the non-holomorphic part is of linear exponential growth towards the cusps as well, an example is the Maass--Eisenstein series from \cite{thebook}*{Theorem 6.15}.} of weight $2-k$, and that it does so {\it surjectively} (as shown by Bruinier and Funke). Subsequently, Bruinier, Ono and Rhoades \cite{BOR} proved that the $\xi_k$ operator has a ``holomorphic companion'' in the case of negative integral weights $k$, namely the Bol operator 
\begin{align} \label{eq:boldef}
\left(\frac{1}{2\pi i}\frac{\partial}{\partial z}\right)^{1-k},
\end{align}
which maps $F$ to a weakly holomorphic modular form $g$, also of weight $2-k$. Both differential operators admit an inverse operator, namely the {\it holomorphic} resp.\ {\it non-holomorphic Eichler integral} $\mathcal{E}_g$ resp.\ $f^*$ (see \eqref{eq:Eichlerholo}, \eqref{eq:Eichlernonholo}). In other words, one can express the splitting of $F$ into a holomorphic and a non-holomorphic part by writing, up to an additive constant term,
\begin{equation} \label{HMFDecomp}
F = \mathcal E_g+f^*.
\end{equation}

Many of the biggest applications in the theory of harmonic Maass forms revolve around the question of finding ``good'' lifts under $\xi_{k}$ of given cusp forms, and given the important role played by Zagier's $f_{k,D}$ functions, it is natural to search for explicit lifts for them. It is very common in the theory of harmonic Maass forms to decompose forms in terms of a basis of the so-called Maass--Poincar\'e series, which are the canonical lifts of classical cuspidal Poincar\'e series.
 As mentioned above, Zagier's functions are in fact hyperbolic Poincar\'e series, and so describing them in terms of the alternative basis of (elliptic) Maass--Poincar\'e series is not only difficult but also unnatural. Since this realization as Poincar\'e series expresses them as group averages of a {\it seed} hit with the Petersson slash action (see \eqref{eq:slashop}), and since the $\xi_k$ operator intertwines with the slash actions in weights $k$ and $2-k$, the most natural candidate for a lift is a group average of a preimage of the seed itself under $\xi_k$. This leads to a differential equation, which Bringmann, Kane, and Kohnen \cite{bkk} solved and then used to build a new hyperbolic Poincar\'e series. 

This construction inevitably led to a new phenomenon, which these authors named {\it locally harmonic Maass forms}. The idea is that they have the same basic features as harmonic Maass forms, but they have jump discontinuities on hyperbolic geodesics dictated by the quadratic forms of a given discriminant. Similar local discontinuities have also been discovered by Hövel \cite{hoevel} in weight $0$, and by Zagier, as he related in private conversations with the third author. These functions of Bringmann, Kane, and Kohnen led to another derivation of modular properties of special modular integrals of Duke--Imamo\={g}lu--T\'{o}th \cite{DIT}. By investigating a speculation of Duke-Imamo\={g}lu-T\'{o}th in a follow-up paper \cite{duimto10}*{(16)}, similar functions with local discontinuities are the result of analytically continuing Parson's \cite{parson} modular integral in weight $2$ \cites{mo1, mat23} as well as of completing those to modular objects (``local cusp forms'') in even weights $2 < k \equiv 2 \pmod{4}$ \cite{mo2}. Recently, Bringmann and the second author \cite{brimo1} extended the picture by constructing forms with continuously, but not differentially removable singularities, and the first two authors \cites{mamo, mamo2} constructed a family of vector-valued local Maass forms along the lines of H\"ovel.

\subsection{Previous work: discussion and example}
\label{PreviousWorkSection}

These locally harmonic lifts of $f_{k,N,D,D_0}$, denoted by $\mathcal{F}_{1-k,N,D,D_0}$, were used by Ehlen, Guerzhoy, Kane, and the third author in \cite{egkr} to give a new criterion for the vanishing of twisted central $L$-values. This work focused on the finitely many cases of weight $2$ newforms in one-dimensional cusp form spaces. Although this choice was simpler in some ways, it had technical complications due to the levels involved and due to weight $2$ being at the boundary of convergence (so the expressions for the functions above are not absolutely convergent in this case). However, that case was chosen to include the primary motivating example of the {\it congruent number problem}. 

This paper develops an analogue in more general spaces of cusp forms. To illustrate the principle, we first discuss the case of the congruent number problem studied in \cite{egkr}. Recall that a natural number $n$ is {\it congruent} if it is the area of a right triangle with rational side lengths. This problem, studied since antiquity, was famously solved (assuming BSD) by Tunnell in \cite{tunnell}. An elementary argument shows that $n$ is congruent if and only if the $n$-th quadratic twist of the congruent number curve $E\colon y^2=x^3-x$ has a point of infinite order. Under BSD, this point exists if and only if we have $L(f\otimes\chi_n,1)=0$, where $\chi_n=\left(\frac{n}{\cdot}\right)$ and where $f$ is the unique normalized cusp form in $S_{2}(\Gamma_0(32))$. As the space of cusp forms of weight $2$ on $\Gamma_0(32)$ is one-dimensional, if we pick any $D_0$ for which $L(f\otimes \chi_{D_0},1)\neq0$ then (extensions of) Kohnen's work should allow us to conclude 
\begin{equation} \label{Lvanishfvanish}
L(f\otimes\chi_D,1)=0\iff f_{1,32,D,D_0}=0,
\end{equation}
because $f_{1,32,D,D_0}$ must be some multiple of $f$. 

The theory of locally harmonic Maass forms is well-suited to detect the vanishing of $f_{1,32,D,D_0}$. Using its locally harmonic lift $\mathcal F_{0,32,D,D_0}$, it decomposes into three pieces, namely a holomorphic part, a non-holomorphic part, and a so-called local polynomial $P_{k,32,D,D_0}$ (see \eqref{locPolydefn}). In other words, we have
\begin{equation} \label{Fdecomp3pieces}
\mathcal F_{0,32,D,D_0}(z)= \alpha \mathcal E_{f_{1,32,D,D_0}}(z)+\beta f^*_{1,32,D,D_0}(z)+P_{k,32,D,D_0}(z),
\end{equation}
where $\alpha, \beta$ are constants depending on the parameters (see Proposition \ref{prop:Fcsplitting}). Note that both the holomorphic part and the non-holomorphic part arise from the \emph{same} cusp form $f_{1,32,D,D_0}$. This is the analogue of \eqref{HMFDecomp} above for harmonic Maass forms. One can think of this decomposition as trading exponential growth at the cusps for discontinuities on the upper half-plane given by the local polynomial.
Since the left-hand side is always modular, and the integrals in the first two terms on the right-hand side vanish if and only if $f_{1,32,D,D_0}$ does, \eqref{Lvanishfvanish} and \eqref{Fdecomp3pieces} allow one to conclude that 
\[
P_{1,32,D,D_0} \text{ is modular } \iff L(f\otimes\chi_D,1)=0.
\]

Ordinary polynomials cannot be modular (apart from constant functions in weight $0$). However, there do exist local polynomials which are modular on a congruence subgroup, but this modularity condition is very restrictive (see \cite{BK} for a general discussion and classifications of such objects). Picking $D_0=-3$, these ideas, along with technical arguments extending Kohnen's work to level 32 and dealing with required analytic continuations in this work, gave a finite test for congruent numbers.

For comparison and context, we first review Tunnell's original formula. For odd $n$ (there is a similar formula for even $n$, which we omit), set
\[A_n\coloneqq\#\{(x,y,z)\in\Z^3 \colon n=2x^2+y^2+32z^2\}, \quad B_n\coloneqq\#\{x,y,z,\in\Z^3 \colon n=2nx^2+y^2+8z^2\}.\]
Then $L(f\otimes \chi_n,1)=0$ (and assuming BSD, $n$ is congruent) if and only if $2A_n=B_n$. While this formulas is efficient numerically, it is unlike other formulas in analytic number theory.

The method of locally harmonic Maass forms described above gives a different formula for this result. As Zagier pointed out to the third author, this is a direct analogue of Dirichlet's class number formula. If $d\equiv3\pmod 8$ and $w_{-d}$ is the number of automorphisms of quadratic forms of discriminant $-d$ , then the class number is related to the central $L$-value of $\chi_d\coloneqq\left(\frac{d}{\cdot}\right)$ by the special case of Dirichlet's Class Number Formula:
\[
\frac{w_{-d}\sqrt{-d}\cdot L(\chi_{-d},1)}{2\pi}=\sum_{x^2+y^2+z^2=d}1.
\]
The following theorem gives a ``$\operatorname{GL_2}$ analogue'' of Dirichlet's formula, via a similar formula, but instead of summing over all triples $x,y,z$, only a cone is summed over, and the summand ``$1$'' is twisted by the genus character.
\begin{thm}[\cite{egkr}]\label{MainCongNumThm}
If $D<0$ is a discriminant with $\vt{D}\equiv3\pmod 8$ and $3\vt{D}$ not a square, then on BSD we have that $\vt{D}$ is congruent if and only if 
\[
\begin{aligned}
\sum_{\substack{[a,b,c]\in\mathcal Q_{-3D}\\ c>0>a\\ 32\mid a}}\chi_{-3}([a,b,c])
-
\sum_{\substack{[a,b,c]\in\mathcal Q_{-3D}\\ a+3b+9c>0>a\\ 32\mid a}}\chi_{-3}([a,b,c])
=0
.
\end{aligned}
\]
In particular, $L(f,1)$ is an explicit non-zero multiple of the left hand side, where $f$ is the unique normalized cusp form in $S_2(\Gamma_0(32))$.
\end{thm}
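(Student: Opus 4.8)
The plan is to reduce the $L$-value criterion to the vanishing of a single cusp form and then to read that vanishing off from the explicit local polynomial of its locally harmonic lift. First I would fix $D_0 = -3$. This choice is admissible: the congruence $\vt{D}\equiv 3\pmod 8$ forces $(-1)^1D_0 = 3>0$ and makes $DD_0 = 3\vt{D}\equiv 1\pmod 8$ a genuine discriminant, it makes the genus character $\chi_{-3}$ on $\mathcal{Q}_{32,-3D}$ well defined, and the hypothesis that $3\vt{D}$ is not a square guarantees that the forms in $\mathcal{Q}_{32,-3D}$ are indefinite with honest geodesics, so that the construction of \cite{bkk} applies. Because $S_2(\Gamma_0(32))$ is one-dimensional and spanned by the congruent-number form $f$, and because $L(f\otimes\chi_{-3},1)\neq 0$, the cusp form $f_{1,32,D,-3}$ is a scalar multiple of $f$; by \eqref{eq:KohnenCor1} (through Proposition~\ref{Prop24} and the extension of Kohnen's theory to level $32$) the scalar is a nonzero multiple of $a_g(\vt{D})$, giving
\begin{equation*}
L(f\otimes\chi_D,1)=0 \iff a_g(\vt{D})=0 \iff f_{1,32,D,-3}=0,
\end{equation*}
which is \eqref{Lvanishfvanish}.

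Next I would pass to the locally harmonic lift $\mathcal{F}_{0,32,D,-3}$ of $f_{1,32,D,-3}$ (a $\xi_0$-preimage) and use its decomposition \eqref{Fdecomp3pieces} into two Eichler integrals and the local polynomial $P_{1,32,D,-3}$. The left-hand side is $\Gamma_0(32)$-invariant, whereas $\mathcal{E}_{f_{1,32,D,-3}}$ and $f^*_{1,32,D,-3}$ transform with the period polynomial of $f_{1,32,D,-3}$. Solving \eqref{Fdecomp3pieces} for $P_{1,32,D,-3}$ shows that $P_{1,32,D,-3}|_0\gamma - P_{1,32,D,-3}$ equals this period polynomial up to the constants $\alpha,\beta$, and the period polynomial is nontrivial unless $f_{1,32,D,-3}=0$, whence
\begin{equation*}
P_{1,32,D,-3}\text{ is modular}\iff f_{1,32,D,-3}=0 \iff L(f\otimes\chi_D,1)=0.
\end{equation*}

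It remains to make the modularity criterion explicit. Since $k=1$, the polynomial pieces of $P_{1,32,D,-3}$ have degree $2k-2=0$, so $P_{1,32,D,-3}$ is locally constant and jumps only across the geodesics $S_Q=\{z\in\H : a|z|^2+b\,\re(z)+c=0\}$ for $Q=[a,b,c]\in\mathcal{Q}_{32,-3D}$. Writing $Q_z = a|z|^2+b\,\re(z)+c$, the \cite{bkk} formula expresses the value of $P_{1,32,D,-3}$ on a chamber as a nonzero constant times $\sum_Q \chi_{-3}(Q)\tfrac12(\operatorname{sgn}(a)-\operatorname{sgn}(Q_z))$, which after exploiting the behaviour of $\chi_{-3}$ under $Q\mapsto -Q$ collapses to a nonzero multiple of the cone count $\sum_{a<0<Q_z}\chi_{-3}(Q)$. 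I would then read off this constant near the two cusps $0$ and $\tfrac13$: as $z\to 0$ one has $Q_z\to c$, while as $z\to\tfrac13$ one has $9Q_z\to a+3b+9c$, so the two chambers isolate exactly the forms with $c>0>a$, respectively $a+3b+9c>0>a$. Their difference is, up to a nonzero factor, the left-hand side of the theorem. Because the one-dimensionality of $S_2(\Gamma_0(32))$ makes every jump of $P_{1,32,D,-3}$ proportional to the single scalar $a_g(\vt{D})$, this difference is itself a nonzero multiple of $a_g(\vt{D})$; hence it vanishes precisely when $P_{1,32,D,-3}$ is modular, and tracing the constants through \eqref{eq:KohnenCor1} yields the stated proportionality with the central $L$-value.

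The main difficulty is twofold. Conceptually, weight $2$ (that is, $k=1$) sits at the boundary of absolute convergence of both $f_{1,32,D,-3}$ and the Poincaré series defining $\mathcal{F}_{0,32,D,-3}$, so each step above must be justified by Hecke's trick and a careful analytic continuation; this is exactly where the non-cubefree level $32$ departs from Kohnen's clean $k\geq 2$ (and cubefree $k=1$) theory and where the bulk of the technical work lies. The second, more bookkeeping-heavy, obstacle is to pin down the precise constant in the \cite{bkk} local polynomial and the role of the cusps $0,\tfrac13$ so that the difference of limiting values reproduces the stated cone sums on the nose; this requires carrying out the local-polynomial computation on $\Gamma_0(32)$ with the genus character $\chi_{-3}$ and verifying that the condition $32\mid a$ is respected by the relevant coset representatives.
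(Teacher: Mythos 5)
Your outline follows exactly the strategy this paper attributes to \cite{egkr}: fix $D_0=-3$, use one-dimensionality of $S_2(\Gamma_0(32))$ and (the level-$32$ extension of) Kohnen's theory to get \eqref{Lvanishfvanish}, pass to the locally harmonic lift and its splitting \eqref{Fdecomp3pieces} to translate vanishing of $L(f\otimes\chi_D,1)$ into modularity of the local polynomial, and then evaluate near $x=0$ and $x=\tfrac13$ (where the sign of $Q_z$ becomes the sign of $c$, resp.\ of $a+3b+9c$) to produce the two cone sums. Up to and including the modularity criterion, and in flagging the weight-$2$ convergence and level-$32$ issues as the technical heart, this is faithful to the paper's sketch.

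The gap is in your final step, where you must justify that comparing only these \emph{two} values suffices. Your stated reason --- that ``one-dimensionality makes every jump of $P_{1,32,D,-3}$ proportional to the single scalar $a_g(\vt{D})$'' --- is false. The jump of $P_{1,32,D,-3}$ across a geodesic $S_Q$ equals, up to a universal nonzero constant, $\chi_{-3}(Q)$ (it comes from the complete beta value $\beta(1;\tfrac12,\tfrac12)=\pi$ in the definition of $\mathcal F_{0,32,D,-3}$); it is independent of $a_g(\vt{D})$ and persists even when $f_{1,32,D,-3}=0$, since a modular local polynomial still has jumps (cf.\ \cite{BK}) --- only its limiting values at rationals become constant. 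What is actually proportional to $a_g(\vt{D})$ is the difference of boundary values: by the Zagier-type splitting (Theorem \ref{thm:zagiersplitting} here, or its weight-$2$ analogue established in \cite{egkr}), one has $\mathscr{P}_{1,32,D,-3}(0)-\mathscr{P}_{1,32,D,-3}(\tfrac13)= a_g(\vt{D})\cdot\kappa$, where $\kappa$ is a $D$-\emph{independent} multiple of $\re\bigl(\mathcal{E}_{f}(0)\bigr)-\re\bigl(\mathcal{E}_{f}(\tfrac13)\bigr)$, $f$ the congruent-number form. Since $0$ and $\tfrac13$ are $\Gamma_0(32)$-equivalent (via $\left(\begin{smallmatrix}11 & 1\\ 32 & 3\end{smallmatrix}\right)$), $\kappa$ is essentially a period (cocycle value) of $f$, and one must \emph{verify that it is nonzero}; otherwise the two sums could agree accidentally even when $L(f\otimes\chi_D,1)\neq0$, and the ``if and only if'' would fail. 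This nonvanishing verification is precisely the role of the ``mysterious'' test values $x_{N,1},x_{N,2}$ of \cite{egkr} that this paper comments on, and it is the missing ingredient in your argument; without it you have only proved that vanishing of the $L$-value implies vanishing of the displayed difference, not the converse.
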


Beyond aesthetic reasons, there are properties of these formulas which are compelling. For instance, a famous result of Monsky \cite{Monsky} is that
\[p\equiv3\pmod 8 \text{ is prime } \implies p \text{ is not congruent}.\]
The authors of \cite{egkr} used a parity argument on the formula above to give a new explanation of this. Since the first sum in Theorem~\ref{MainCongNumThm} doesn't depend on $b$ but only on $b^2$ in the discriminant, and as it turns out that the genus character doesn't depend on $b$ at all, the first sum is invariant under the involution $b\mapsto-b$. Hence, the first sum always contains an even number of summands. Thus, it suffices to show that the set of quadratic forms of discriminant $3p$ with $a+3b+9c>0>a$ and $32\mid a$ contains an {\it odd} number of forms. This was confirmed directly by Genz in unpublished work. 

\subsection{Main results}
In the present paper our main purposes are two-fold. Firstly, we place Theorem \ref{MainCongNumThm} into a much more general (and natural) framework. We restrict to square-free levels to avoid technical complications. In particular, we consider spaces of cusp forms with arbitrary weight $2k \geq 4$ and not necessarily one-dimensional. We restrict to weights greater than $2$ simply for convenience. Note that \eqref{Lvanishfvanish} no longer holds in dimensions greater than one, and so we turn to the theory of Hecke operators to isolate a given newform.
	
Since \cite{egkr} dealt very explicitly with the weight $2$ and level $32$ case and constructing $\mathcal F_{0,N,D,D_0}$ explicitly as a theta lift, much of their paper was devoted to technical details, proving convergence and that certain functions agree where required. We avoid much of this technicality. Moreover, the values $x_{N,1}$ and $x_{N,2}$ that the authors of \cite{egkr} used as their test cases are somewhat mysterious. Here we show that in fact one may choose any rationals to determine the vanishing of the central $L$-values at hand. Further, the conditions for the discriminants $D,D_0$ in the present paper appear more naturally than the special case of \cite{egkr}. This leads to a new proof in the case of $\mathrm{dim}(S_{2k}(N)) = 1$ as well.

To state our main theorem, we require the limit of $P_{k,N,D,D_0}$ towards some $x \in \Q$. This extends a function of Zagier \cite{zagier1} to higher levels. To this end, we define
\begin{align} \label{eq:Pmaindef}
\mathscr{P}_{k,N,D,D_0}(x) \coloneqq \sum_{\substack{Q=[a,b,c] \in \mathcal{Q}_{N, DD_0} \\ a < 0 < Q(x,1)}} \chi_{D_{0}}(Q) Q(x,1)^{k-1}, \qquad x \in \Q.
\end{align}
\begin{thm}\label{Thm: intro}
Let $N$ be square-free and $k > 1$ be an integer. Let $f \in S_{2k}^{new}(N)$ be a Hecke eigenform normalized such that $f(z) = q+O(q^2)$. Let $D$ and $D_0$ be fundamental discriminants with $(-1)^kD$, $(-1)^kD_0>0$, and $\left(\frac{D}{\ell}\right)=\left(\frac{D_0}{\ell}\right)=w_{\ell}$ for all primes $\ell\mid N$, where $w_{\ell}$ is the eigenvalue of $f$ under the Atkin-Lehner involution $W_{\ell}$. Moreover, assume that $D$ and $D_0$ are each perfect squares modulo $4N$, and $DD_0$ is not a perfect square. Choose primes $p_1,\ldots, p_m \nmid N$ and numbers $a_1,\ldots,a_m \in \C$ such that the image of
\begin{align*}
\widetilde{\mathbb{T}} \coloneqq \left(T_{p_1}- p_1^{1-2k} a_{p_1}\right)\cdots\left(T_{p_m}-p_m^{1-2k}a_{p_m}\right)
\end{align*}
is non-trivial and contained in the subspace spanned by $f$. Then, we have
\begin{align*}
L(f\otimes\chi_D,k)L(f\otimes\chi_{D_0},k) = 0
\end{align*}
if and only if
\begin{align*}
\widetilde{\mathbb{T}} \big\vert_{2-2k} \mathscr{P}_{k,N,D,D_0}(x)
\end{align*}
is a constant function of $x\in\Q$.
\end{thm}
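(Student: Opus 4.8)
The plan is to reduce the vanishing of the product $L(f\otimes\chi_D,k)L(f\otimes\chi_{D_0},k)$ first to the vanishing of the $f$-isotypic component of the cusp form $f_{k,N,D,D_0}$, then to the modularity of a Hecke-translate of the local polynomial, and finally to the constancy of its rational boundary values $\mathcal{P}_{k,N,D,D_0}$.

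To begin, I would expand $f_{k,N,D,D_0}=\sum_j \lambda_j f_j$ in a basis of normalized Hecke eigenforms $f_j\in S_{2k}^{new}(N)$ and appeal to the level-$N$ Kohnen--Zagier kernel theory recalled in \eqref{eq:KohnenCor1} and Proposition~\ref{Prop24}. Interpreting $f_{k,N,D,D_0}$ as a two-variable Shimura/Shintani kernel identifies the coefficient $\lambda_f$ of our fixed $f$ with a nonzero multiple of $a_g(\vt{D})\,a_g(\vt{D_0})$, so that $\lambda_f^2\doteq L(f\otimes\chi_D,k)\,L(f\otimes\chi_{D_0},k)$. The hypotheses on $D,D_0$ are exactly what make this pairing meaningful: matching the Atkin--Lehner signs $\left(\frac{D}{\ell}\right)=\left(\frac{D_0}{\ell}\right)=w_\ell$ and requiring $D,D_0$ to be squares modulo $4N$ place $f_{k,N,D,D_0}$ in the eigenspace carrying $f$ and make $\chi_{D_0}$ well defined on $\mathcal{Q}_{N,DD_0}$, while $DD_0$ not being a square forces the relevant forms to be indefinite with irrational roots, so that the governing geodesics are genuine. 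Hence the product of central $L$-values vanishes if and only if $\lambda_f=0$.

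Next I would exploit that $\widetilde{\mathbb{T}}$ isolates $f$. The assumption that its image lies in the span of $f$ forces $\widetilde{\mathbb{T}}f_j=0$ for every newform $f_j\neq f$, while $\widetilde{\mathbb{T}}f=\mu f$ for the eigenvalue $\mu=\prod_i\left(\lambda_{p_i}^{(f)}-p_i^{1-2k}a_{p_i}\right)$, which the standard construction (possible by strong multiplicity one, with the factor $p_i^{1-2k}$ reflecting the weight $2-2k$ normalization) arranges to be nonzero. Therefore $\widetilde{\mathbb{T}}f_{k,N,D,D_0}=\mu\lambda_f f$, which vanishes if and only if $\lambda_f=0$, i.e.\ if and only if the product of $L$-values vanishes. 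I would then apply $\widetilde{\mathbb{T}}$ to the three-term decomposition of the locally harmonic lift $\mathcal{F}_{1-k,N,D,D_0}$ modelled on \eqref{Fdecomp3pieces}, using that the Hecke action commutes with $\xi_{2-2k}$ and with the formation of the Eichler integrals, to obtain
\begin{equation*}
\widetilde{\mathbb{T}}\mathcal{F}_{1-k,N,D,D_0}=\alpha\,\mathcal{E}_{\widetilde{\mathbb{T}}f_{k,N,D,D_0}}+\beta\,\big(\widetilde{\mathbb{T}}f_{k,N,D,D_0}\big)^{*}+\widetilde{\mathbb{T}}P_{k,N,D,D_0}.
\end{equation*}
Since the left-hand side is modular and a local polynomial lies in the kernel of $\xi_{2-2k}$, applying $\xi_{2-2k}$ shows that $\widetilde{\mathbb{T}}\mathcal{F}$ reduces to $\widetilde{\mathbb{T}}P_{k,N,D,D_0}$ precisely when $\widetilde{\mathbb{T}}f_{k,N,D,D_0}=0$; conversely, if $\widetilde{\mathbb{T}}f_{k,N,D,D_0}\neq0$ then the nonvanishing period functions of a cusp form (Eichler--Shimura) prevent $\alpha\mathcal{E}+\beta(\cdot)^{*}$ from being modular, so $\widetilde{\mathbb{T}}P_{k,N,D,D_0}$ cannot be. Thus $\widetilde{\mathbb{T}}P_{k,N,D,D_0}$ is modular if and only if the product of $L$-values vanishes.

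Finally, I would use Lemma~\ref{lem:locPolylimit} to pass from $\widetilde{\mathbb{T}}P_{k,N,D,D_0}(z)$ to its rational boundary values $\widetilde{\mathbb{T}}|_{2-2k}\mathcal{P}_{k,N,D,D_0}(x)$ and prove the equivalence ``$\widetilde{\mathbb{T}}P$ modular $\iff \widetilde{\mathbb{T}}\mathcal{P}$ constant.'' The jumps of the local polynomial across the geodesics $Q(z,1)=0$, $Q\in\mathcal{Q}_{N,DD_0}$, are recorded in the limit $z\to x$ by the discontinuities of $\mathcal{P}(x)$ as $x$ crosses the roots of the forms $Q$, as is visible from \eqref{PDefinition}. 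A modular local polynomial in negative weight $2-2k$ must have all such jumps cancel, collapsing $\widetilde{\mathbb{T}}\mathcal{P}$ to a constant; conversely, constancy of the boundary values forces every jump to vanish, so that $\widetilde{\mathbb{T}}P$ extends to a single polynomial on $\mathbb{H}$ and is therefore modular. I expect this last step to be the principal obstacle: one must track the boundary behaviour of the Hecke-translated local polynomial with care and invoke the rigidity of modular local polynomials (cf.\ \cite{BK}) to ensure that modularity is genuinely equivalent to constancy of the rational boundary values rather than to a weaker periodicity.
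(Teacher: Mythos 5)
Your first three steps track the paper's own proof quite closely: the identification of the $f$-component of $f_{k,N,D,D_0}$ with the product of central $L$-values is Kohnen's theory (the paper routes this through cycle integrals via Proposition~\ref{Prop: kohnen} and Corollary~\ref{Cor: kohnen}, you through the coefficient formulation of Proposition~\ref{Prop24}; these are interchangeable), and the isolation of $f$ by $\widetilde{\mathbb{T}}$ together with the compatibility of the Hecke action with $\xi_{2-2k}$, $\mathbb{D}^{2k-1}$ and the splitting of $\Fc_{1-k,N,D,D_0}$ is exactly what Sections~\ref{Sec: HL ops} and~\ref{Sec: proof of main theorem} do. Two points you gloss over are real but harmless: $f_{k,N,D,D_0}$ has an oldspace component that your newform expansion drops (the hypothesis on $\widetilde{\mathbb{T}}$ annihilates it), and the assertion that $\alpha\mathcal{E}_g+\beta g^{*}$ cannot be modular for $g\neq 0$ needs the nonvanishing of the \emph{even} period polynomial for the specific real constants $\alpha,\beta$ of the splitting, which is precisely what the paper's Section~\ref{Sec: period polys and local polys} (Proposition~\ref{prop:periodpoly}, Corollary~\ref{prop:localmodularprop}) supplies.

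The genuine gap is your final step, and the mechanism you propose there is false in both directions. The jumps of $\widetilde{\mathbb{T}}P_{k,N,D,D_0}$ across the geodesics coincide with those of $\widetilde{\mathbb{T}}\Fc_{1-k,N,D,D_0}$ (the Eichler integrals are continuous), and the jump of $\Fc_{1-k,N,D,D_0}$ across $S_Q$ is an \emph{intrinsic} nonzero multiple of $\chi_{D_0}(Q)Q(z,1)^{k-1}$, coming from $\operatorname{sgn}(Q_z)$ flipping while the incomplete beta function equals the full $B\left(k-\tfrac{1}{2},\tfrac{1}{2}\right)$ on $S_Q$; it is completely insensitive to whether any cusp form vanishes. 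Take the paper's first numerical example, $N=7$, $k=2$, $D_0=29$, $D=37$, where $\dim_{\C}S_4(7)=1$, $\widetilde{\mathbb{T}}=\id$, and $L(f\otimes\chi_{37},2)=0$: here $f_{2,7,37,29}=0$, so $P_{2,7,37,29}=\Fc_{-1,7,37,29}$ is modular and $\mathcal{P}_{2,7,37,29}$ is constant (Table~\ref{table:lvl7table} shows $\Ps_{2,7,37,29}\equiv 144$, so in particular the genus character values and the jumps do not vanish), yet $P_{2,7,37,29}$ equals the nonzero constant $c_\infty(7,37\cdot 29)$ on $\Cc_{i\infty}$ while its rational boundary values are a different constant; no single polynomial on $\H$ can do that, and a globally polynomial modular function of negative weight would have to vanish identically anyway. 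So ``modular $\Rightarrow$ jumps cancel $\Rightarrow$ boundary values constant'' and ``boundary values constant $\Rightarrow$ jumps vanish $\Rightarrow$ global polynomial $\Rightarrow$ modular'' both break down; nonconstant modular local polynomials with genuine jumps exist, and they are exactly the objects classified in \cite{BK} and produced by \cite{bkk}. What replaces your jump bookkeeping in the paper is genuinely global input: for ``constant $\Rightarrow$ vanishing'' the paper upgrades $\widetilde{\mathbb{T}}\Fc_{1-k,N,D,D_0}$ to an honest harmonic Maass form, holomorphic at the cusps and mapping to the same cusp form under both $\xi_{2-2k}$ and $\mathbb{D}^{2k-1}$, hence zero; for ``vanishing $\Rightarrow$ constant'' it evaluates $P$ on $\Cc_{i\infty}$ (where the defining sum is empty), transfers this to $\Cc_0$ by the Fricke invariance of $\Fc$ (Lemma~\ref{lem:FcalFricke}), and then reaches all rationals by $\Gamma_0(N)$-equivalence. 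Alternatively one can argue via Theorem~\ref{thm:zagiersplitting}, under which $\mathcal{P}_{k,N,D,D_0}$ is a nonzero multiple of $\delta_{k,N,DD_0}\Phi_k$ with $\Phi_k$ of mean zero, so constancy is equivalent to identical vanishing; but some such structural input is indispensable, and your proposal, which you yourself flag as the principal obstacle, does not contain it.
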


\begin{rmks}
\
\begin{enumerate}[leftmargin=*]
\item In level $N=1$, Kong \cite{kongthesis}*{Chapter 4} also considered similar Hecke operators in the context of locally harmonic Maass forms and vanishing of $L$-values. However, the results there are not explicit. In the present paper we give the full treatment in square-free levels.
\item Parallel to the review of this paper, Dombrowsky \cites{dombr26thesis} extended Theorem \ref{Thm: intro} to general odd $N$ based on earlier work by Sataka \cites{sataka05, sataka08}, Ueda \cites{ueda88, ueda93, ueda98} and Ueda--Yamana \cite{ueya}. Her main result (Theorem 3.1) is slightly different from ours, which might arise from a different behavior of the polynomials $\mathscr{P}_{k,N,D,D_0}$ in question.
\item The sum in \eqref{eq:Pmaindef} is in fact a finite sum; Zagier showed this explicitly in \cite{zagier1} (see the proof of \cite{jameson}*{Lemma 2.3} for a corrected version). However, the implied bounds on the coefficients of the quadratic form are impractical to use in the examples below (see Section \ref{Sec: examples}). As we will discuss below, Bengoechea \cite{Bengoechea} studied such sums using continued fraction expansions of $x$, and this framework provides a substantially quicker means to compute the sums in \eqref{eq:Pmaindef} than the naive estimates would give.
\item The Hecke-like polynomial above is of degree at most $\mathrm{dim}_{\C}\left(S_{2k}(N)\right)-1$. In particular, if $\mathrm{dim}_{\C}\left(S_{2k}(N)\right)=1$, then the product $L(f\otimes\chi_D,k)L(f\otimes\chi_{D_0},k)$ vanishes if and only if $\mathscr{P}_{k,N,D,D_0}$ is constant.
\item The conditions on the Kronecker symbols for $D$ and $D_0$ are natural as they exclude cases of trivial $L$-value vanishing due to the sign of the functional equation. 
\item As the polynomial $\mathscr{P}_{k,N,D,D_0}$ has degree $2k-2$, in order to show vanishing of the $L$-values at $D$ and $D_0$, it suffices to evaluate at $2k-1$ rational values $x$. 
\end{enumerate}
\end{rmks}

In \cite{zagier1}, Zagier studied untwisted versions (i.e., without a genus character) of \eqref{eq:Pmaindef} in level $1$. He noted for example, that in situations when the corresponding space of cusp forms is empty that this polynomial is a constant function. Our Theorem \ref{Thm: intro} interprets this as a special case whereby the $L$-values in those cases vanish as the forms in question are identically zero. In a private conversation with the second author, Zagier expected that higher level analogs of his work \cite{zagier1} hold for $\mathscr{P}_{k,N,D,D_0}$ as well. We confirm Zagier's expectations now, which will be key in proving Theorem \ref{Thm: intro}. Generalizing Zagier \cite{zagier1}*{(55)}, we offer the following result.
\begin{thm}\label{thm:zagiersplitting}
Let $N \in \N$ be square-free. Let $D_0$ be a fundamental discriminant, and $D$ be a discriminant such that $\operatorname{sgn}(D_0) = \operatorname{sgn}(D) = (-1)^k$ and that $DD_0$ is not a perfect square. Let $k > 1$,  $c_{\infty}(N,DD_0)$ be the constant from \eqref{cinftydefn}, and $c_{f_{k,N,D,D_0}}$ be the Fourier coefficients of $f_{k,N,D,D_0}$ (see \cite{Kohnen}*{Proposition 2} and recalled in \eqref{eq:fkDFourier} below). We define
\begin{align*}
\gamma_{k,N,DD_0} &\coloneqq \frac{(-1)^k 2^{2k-2}}{\pi \binom{2k-2}{k-1}} c_{\infty}(N,DD_0), \qquad \Phi_{k,N,D,D_0}(x) \coloneqq \sum_{n \geq 1} \frac{c_{f_{k,N,D,D_0}}(n)}{n^{2k-1}} \cos(2\pi n x).
\end{align*}
Then, we have the Fourier expansion
\begin{align*}
\Ps_{k,N,D,D_0}(x) = \gamma_{k,N,DD_0} + \frac{(DD_0)^{k-\frac{1}{2}}\Gamma(k)^2}{(2\pi)^{2k}} \Phi_{k,N,D,D_0}(x).
\end{align*}
\end{thm}

\begin{rmks}
\
\begin{enumerate}[leftmargin=*]
\item 
The function $\Phi_{k,N,D,D_0}$ can be written as $\re\left(\Ec_{f_{k,N,D,D_0}}(x)\right)$, where $\Ec_{f_{k,N,D,D_0}}$ denotes the holomorphic Eichler integral of $f_{k,N,D,D_0}$ (see \eqref{eq:Eichlerholo}) extended to $\R$ in the natural way.
\item Note that Theorem \ref{thm:zagiersplitting} holds in any dimension of $S_{2k}(N)$.
\end{enumerate}
\end{rmks}

In view of Theorem \ref{thm:zagiersplitting}, we obtain a more explicit version of Theorem \ref{Thm: intro}.
\begin{cor} \label{cor:maintheoremexplicit}
Assume the notation and hypotheses from Theorems \ref{Thm: intro} and \ref{thm:zagiersplitting}. Define
\begin{align*}
\gamma_{k,N,DD_0}' &\coloneqq \widetilde{\mathbb{T}}\Big\vert_{2-2k}\gamma_{k,N,DD_0} 
= \left(1+p_1^{1-2k}-p_1^{1-2k}a_{p_1}\right)\cdots\left(1+p_m^{1-2k}-p_m^{1-2k}a_{p_m}\right)\gamma_{k,N,DD_0}.
\end{align*}
Then, we have
\begin{align*}
L(f \otimes \chi_D,k) L(f \otimes \chi_{D_0},k) = 0
\end{align*}
if and only if 
\begin{align*}
\widetilde{\mathbb{T}}\Big\vert_{2-2k} \Ps_{k,N,D,D_0}(x) = \gamma_{k,N,DD_0}'.
\end{align*}
\end{cor}

Corollary \ref{cor:maintheoremexplicit} motivates us to inspect the constant $\gamma_{k,N,DD_0}$ closer. It turns out that they admit a representation in terms of so-called generalized Hurwitz class numbers $H(k,1,\ell,N;n)$, which were introduced by Pei and Wang \cite{PeiWang} in $2003$. We recall their explicit definition in Subsection \ref{Sec: CE series}. These numbers are the Fourier coefficients of the generalized Cohen--Eisenstein series associated to every $\ell \mid N$. These Eisenstein series are modular forms of weight $k+\frac{1}{2}$ and level $4N$ in the plus space (excluding $\ell = 1$ if $k=1$), and thus are higher level analogs of Cohen's classical half-integral weight Eisenstein series \cite{coh75}. If $k=1$, these numbers were investigated in recent work by Beckwith and the second author \cites{bemo, bemo2}. If $k >1$, these numbers were examined by the second author in a follow-up paper \cite{mo3}.
\begin{thm} \label{thm:cinftyexplicit}
Let $k>1$ and $N \in  \N$. Suppose that $N$ is odd and square-free. Let $D_0 > 0$ be a fundamental discriminant, and $D > 0$ be a discriminant. Then, we have
\begin{multline*}
\frac{(2k-1)}{(DD_0)^{k-\frac{1}{2}}} c_{\infty}(N,DD_0) = \frac{\binom{2k-2}{k-1}}{(-1)^k} \gamma_{k,N,DD_0} \\
= \frac{2^{2k-2}}{\pi \zeta(2k)}\sum_{\ell \mid N} \mu(\ell) \Big(\prod_{\substack{p \text{ prime} \\ p \mid \ell}} \frac{1}{1-p^{-2k}}\Big) H(1-k,1,\ell,\ell;D)H(1-k,1,\ell,\ell;D_0).
\end{multline*}
\end{thm}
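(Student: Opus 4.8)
The plan is to prove the identity by computing both sides of the claimed equation explicitly and matching them. The first equality, relating $c_\infty(N,DD_0)$ and $\gamma_{k,N,DD_0}$, is immediate from the definition of $\gamma_{k,N,DD_0}$ given in Theorem \ref{thm:zagiersplitting}, so the real content is the second equality expressing $\gamma_{k,N,DD_0}$ (equivalently $c_\infty$) in terms of a product of two generalized Hurwitz class numbers $H(1-k,1,\ell,\ell;D)H(1-k,1,\ell,\ell;D_0)$ summed against $\mu(\ell)$ over divisors $\ell \mid N$.

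First I would recall the precise definition of the constant $c_{\infty}(N,DD_0)$ from equation \eqref{cinftydefn}. This constant arises as the constant term in the local polynomial decomposition of the locally harmonic Maass form $\mathcal{F}_{1-k,N,D,D_0}$, so it should have an intrinsic description as a period integral or as a growth/constant-term contribution at the cusp $\infty$. The natural strategy is to express $c_\infty$ as a sum over the quadratic forms $Q = [a,b,c] \in \mathcal{Q}_{N,DD_0}$ (or over the relevant geodesics hitting the cusp), weighted by the genus character $\chi_{D_0}(Q)$. The key structural fact to exploit is that the genus character factors multiplicatively across the prime divisors of $N$, and more importantly that the condition $N \mid a$ combined with the square-free hypothesis on $N$ allows one to apply a Möbius inversion over divisors $\ell \mid N$; this is precisely what produces the $\sum_{\ell \mid N} \mu(\ell)$ appearing on the right-hand side.

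The heart of the argument is then to recognize the resulting arithmetic sum as a product of two generalized Hurwitz class numbers. For this I would invoke the definition of $H(1-k,1,\ell,\ell;n)$ from Subsection \ref{Sec: CE series} (Pei--Wang \cite{PeiWang}) together with the fact that, for $k > 1$, these numbers are the Fourier coefficients of the generalized Cohen--Eisenstein series. The genus character $\chi_{D_0}$ effects a factorization $\mathcal{Q}_{DD_0} \to \mathcal{Q}_D \times \mathcal{Q}_{D_0}$ at the level of ideal classes (this is the classical genus-theory content underlying the Shimura/Shintani correspondence and Kohnen's work), and this factorization is exactly what decouples the single sum indexed by discriminant $DD_0$ into the \emph{product} $H(1-k,1,\ell,\ell;D) \cdot H(1-k,1,\ell,\ell;D_0)$. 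The prefactors $\frac{2^{2k-2}}{\pi \zeta(2k)}$ and $\prod_{p \mid \ell}(1-p^{-2k})^{-1}$ should emerge from normalizing the Eisenstein series and from the local Euler factors at primes dividing $\ell$, using the hypothesis that $D = \widetilde{D} f_D^2$ with $\widetilde{D}$ fundamental to handle the non-fundamental case via the standard formula expressing $H$ at a non-fundamental discriminant in terms of a divisor sum over the conductor $f_D$.

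The main obstacle I expect is the careful bookkeeping of the local factors and normalizations: matching the Euler products $\prod_{p\mid\ell}(1-p^{-2k})^{-1}$ and the $\zeta(2k)$ denominator requires comparing the local densities appearing in $c_\infty$ against the precise Pei--Wang normalization of the generalized Cohen--Eisenstein coefficients, prime by prime. A second delicate point is ensuring the genus-character factorization respects the level condition $N \mid a$ uniformly across the Möbius sum, since the splitting $\chi_{D_0}(Q) = \chi_D(Q_1)\chi_{D_0}(Q_2)$ under the genus correspondence must be compatible with the divisibility constraints imposed at each $\ell \mid N$. Once these local computations are aligned, the global identity follows by multiplicativity, and the passage between $c_\infty$ and $\gamma_{k,N,DD_0}$ together with the factor $\frac{(2k-1)}{(DD_0)^{k-1/2}}$ is a purely formal rearrangement of the definitions.
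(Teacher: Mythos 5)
Your skeleton does match the paper's proof in its outer structure: the first equality is purely formal (it is the relation $\gamma_{k,N,DD_0} = \frac{(-1)^k 2^{2k-2}}{\pi \binom{2k-2}{k-1}} c_{\infty}(N,DD_0)$ from Theorem \ref{thm:zagiersplitting}, i.e.\ Proposition \ref{prop:localpolyaverage}), the condition $N \mid a$ in \eqref{cinftydefn} is removed by exactly the M\"obius device you describe (this is Lemma \ref{lem:levelreduction}, which is where square-freeness of $N$ enters), the conductor $f_D$ is handled through divisor sums, and the final identification is made by comparing with the Pei--Wang definition of $H(1-k,1,\ell,\ell;\cdot)$ as an incomplete $L$-value times a conductor sum (the Cohen--Eisenstein series themselves play no role in the paper's argument).

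The gap is in the step you call the heart of the argument. There is no factorization $\mathcal{Q}_{DD_0} \to \mathcal{Q}_D \times \mathcal{Q}_{D_0}$ of quadratic forms, and no splitting $\chi_{D_0}(Q) = \chi_D(Q_1)\chi_{D_0}(Q_2)$: genus theory does not decompose an individual form of discriminant $DD_0$ into a pair of forms of discriminants $D$ and $D_0$, so the assertion that ``this factorization decouples the single sum into the product'' has no precise statement behind it as written. What the paper uses instead is a statement about the weighted counting function
\[
\psi(a) \coloneqq \sum_{\substack{0 \leq b < 2a \\ b^2 \equiv DD_0 \pmod{4a}}} \chi_{D_{0}}\left[a,b,\frac{b^2-DD_0}{4a}\right],
\]
namely Propositions~1 and~2 of Wong \cite{wong}: $\psi$ is multiplicative in $a$, and its Euler factors evaluate explicitly as
\[
\sum_{j\geq 0}\frac{\psi\left(p^j\right)}{p^{jk}}
= \frac{1-p^{-2k}}{\left(1-\chi_{\widetilde{D}}(p)p^{-k}\right)\left(1-\chi_{D_{0}}(p)p^{-k}\right)} \cdot \left(\text{a factor that equals } 1 \text{ unless } p \mid f_D\right).
\]
It is this prime-by-prime identity for the Dirichlet series $\sum_{a} \psi(a) a^{-k}$ --- not a correspondence of forms --- that separates $\widetilde{D}$ from $D_0$, produces the $\zeta(2k)^{-1}$ and the factors $\prod_{p \mid \ell}(1-p^{-2k})^{-1}$ after the M\"obius sum (via $L_\ell(2k,\id)^{-1}$), and yields $L_\ell(k,\chi_{D_0}) L_\ell(k,\chi_{\widetilde{D}})$ times the conductor sum, i.e.\ exactly $H(1-k,1,\ell,\ell;D) H(1-k,1,\ell,\ell;D_0)$. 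Your intuition points at the right classical circle of ideas (these local computations are ultimately genus-theoretic in origin), but to complete the proof you must either quote Wong's local evaluation or reprove it; the appeal to an ideal-class-level correspondence cannot be made rigorous in the form you state it.
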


\begin{rmks}
	\
	\begin{enumerate}[leftmargin=*]
		\item In Theorem \ref{Thm: intro}, both $D$ and $D_0$ are always coprime to the level. In this case, one may apply the functional equation of Dirichlet $L$-functions to obtain an expression in terms of $H(k,1,\ell,\ell;D)H(k,1,\ell,\ell;D_0)$ multiplied by some local factors.
		\item The sums over $\ell \mid N$ resemble Kohnen's \cite{Kohnen}*{eq.\ (3), Theorem 2} description of the Shintani lift in terms of $f_{k,\frac{N}{\ell},D,D_0}$ with $\ell \mid N$.
	\end{enumerate}
\end{rmks}

The remainder of the paper is organized as follows. In Section \ref{Sec: Prelims} we summarize some preliminaries required for the rest of the paper, introducing the various objects we require along with their central results from the literature. Section \ref{Sec: local polys} is dedicated to the behavior of the local polynomials and period-like polynomials for $f_{k,N,D,D_0}$. Section \ref{sec:proofzagiersplitting} provides two proofs of Theorem \ref{thm:zagiersplitting}. In Section \ref{Sec: proof of main theorem} we prove our main Theorem \ref{Thm: intro} by separating the cases $\dim_{\C}(S_{2k}(N)) = 1$ and $\dim_{\C}(S_{2k}(N)) > 1$. Section \ref{sec:proofofcinftyexplicit} is devoted to the proof of Theorem \ref{thm:cinftyexplicit}. We provide some examples to Theorem \ref{Thm: intro} in Section \ref{Sec: examples}. We conclude the paper in Section~\ref{FutureWork} by describing several possible questions for future work.

\section*{Acknowledgements}
We would like to thank Kathrin Bringmann, Markus Schwagenscheidt and Don Zagier for many helpful comments on an earlier version of the paper as well as to Michael Griffin for helpful discussions. Moreover, we would like to thank Charlotte Dombrowsky for pointing out several minor typos in an earlier version. In addition, we are very grateful to the anonymous referee for his thourough review of the manuscript, which improved the exposition significantly.

Part of the research of the first author conducted for this paper is supported by the Pacific Institute for the Mathematical Sciences (PIMS). The research and findings may not reflect those of the Institute. Parts of the second author's research was supported by the CRC/TRR 191 ``Symplectic Structures in Geometry, Algebra and Dynamics'', funded by the DFG (project number 281071066). The third author is grateful for support from a grant from the Simons Foundation (853830, LR), support from a Dean’s Faculty Fellowship from Vanderbilt University, and to the Max Planck Institute for Mathematics in Bonn for its hospitality and financial support. The second and third authors are grateful to the San Souci Institute of Mathematics in Ellijay, Georgia, for its hospitality. In addition, the first three authors are grateful for support of: The Shanks Endowment, Vanderbilt University, and NSF Grant: “Conference: International conference on L-functions and automorphic forms” award number: DMS-2349888.

\section*{Notation}

We provide a list of notation of the most prominent objects used throughout the paper.

\begin{itemize}
	\item Throughout we use the variable $z \in \H$ and let $q \coloneqq e^{2 \pi i z}$.

	\item $S_{2k}(N)$ is the space of cusp forms of weight $2k$ and level $N$.
	
	\item $S_{2k}^{new}(N)$ is the newspace of weight $2k$ cusp forms of level $N$.
	
	\item A \textit{newform} of weight $2k$ and level $N$ is a Hecke eigenform in $S_{2k}^{new}(N)$ which we assume is normalized to have its first Fourier coefficient equal to $1$.
	
	\item $\langle \cdot,\cdot\rangle$ is the Petersson inner-product normalized by $\left[\slz\colon \Gamma_{0}(N) \right]$.
	
	\item $L(f,s)$ is the $L$-function associated to $f$.
	
	\item $L(f\otimes\chi,s)$ is the $L$-function associated to $f$ twisted by a Dirichlet character $\chi$.
	
	\item $r_{f}(z)$ is the period-like polynomial of $f \in S_{2k}(N)$ with even part $r_f^+(z)$.
	
	\item We abbreviate the quadratic form $Q(x,y)=ax^2 +bxy +cy^2$ with discriminant $b^2 -4ac$ by $Q$ or $[a,b,c]$.

	\item $\mathcal{Q}_{D}$ denotes the set of all integral binary quadratic forms with discriminant $D$. 

	\item $\Qc_{N,D}$ is the set of forms $[a,b,c] \in \Qc_D$ such that $N\mid a$.

	\item $[a,b,c]_z = \frac{1}{\im(z)}\left(a\vt{z}^2+b\re(z)+c\right)$ encodes the Heegner geodesic associated to $[a,b,c] \in \Qc_{N,DD_0}$ (given by $S_{[a,b,c]} \coloneqq \{z \in \H \colon a\vt{z}^2+b\re(z)+c=0\}$).

	\item $\chi_{D_{0}} (Q)$ is the extended genus character associated to the discriminant $D_0$.
	
	\item For discriminants $D, D_0 \equiv 0, 1 \pmod{4}$ and $DD_0 >0$, define
	\begin{equation*}
		f_{k,N,D,D_0}(z) \coloneqq \sum_{Q \in \mathcal{Q}_{N,DD_0}} \chi_{D_0}(Q) Q(z,1)^{-k}.
	\end{equation*}

\item $	\mathcal{F}_{1-k,N,D,D_0}(z)$ is the locally harmonic Maass form defined by 
\begin{align*}
\hspace*{\leftmargini} \mathcal{F}_{1-k,N,D,D_0}(z) \coloneqq \frac{1}{2}\sum_{Q \in \mathcal{Q}_{N,DD_0}} \chi_{D_{0}}(Q) \operatorname{sgn}(Q_z) Q(z,1)^{k-1} \beta\left(\frac{DD_0\im(z)^2}{\lvert Q(z,1)\rvert^2}; k-\frac{1}{2},\frac{1}{2}\right),
\end{align*}
where $\beta(x;r,s) \coloneqq \int_{0}^{x} t^{r-1}(1-t)^{s-1}\dm t$ refers to the incomplete $\beta$-function.

\item $E_{N,DD_0} = \bigcup_{Q \in \Qc_{N,DD_0}} \{z \in \H \colon Q_z = 0\}$ is the exceptional set of $\mathcal{F}_{1-k,N,D,D_0}$, namely it contains the jumping singularities of $\mathcal{F}_{1-k,N,D,D_0}$.

\item We define the constant
\begin{align*}
\hspace*{\leftmargini} c_{\infty}(N, DD_0) \coloneqq \frac{(DD_0)^{k-\frac{1}{2}}}{(2k-1)}\pi 2^{2-2k}\sum_{\substack{a \geq 1 \\ N \mid a}} \frac{1}{a^k} \sum_{\substack{0 \leq b < 2a \\ b^2 \equiv DD_0 \pmod{4a}}} \chi_{D_{0}}\left[a,b,\frac{b^2-DD_0}{4a}\right].
\end{align*}

\item If $z \in \H \setminus E_{N,DD0}$, the local polynomial of $\Fc_{1-k,N,D,D0}$ is denoted by
\begin{multline*}
\hspace*{\leftmargini} P_{k,N,D,D_0}(z) \coloneqq c_\infty(N, DD_0) \\
+ (-1)^{k-1} \binom{2k-2}{k-1} \pi 2^{2-2k} \sum_{\substack{Q=[a,b,c] \in \mathcal{Q}_{N,DD_0} \\ a < 0 < Q_z}} \chi_{D_{0}}(Q) Q(z,1)^{k-1}.
\end{multline*}

\item The generalization of Zagier's polynomial are
\begin{align*}
\mathscr{P}_{k,N,D,D_0}(x) \coloneqq \sum_{\substack{Q=[a,b,c] \in \mathcal{Q}_{N, DD_0} \\ a < 0 < Q(x,1)}}. \chi_{D_{0}}(Q) Q(x,1)^{k-1}.
\end{align*}

\item Theorem \ref{thm:zagiersplitting} introduces the notation
\begin{align*}
\hspace*{\leftmargini}
\gamma_{k,N,DD_0} &\coloneqq \frac{(-1)^k 2^{2k-2}}{\pi \binom{2k-2}{k-1}} c_{\infty}(N,DD_0), \quad \Phi_{k,N,D,D_0}(x) \coloneqq \sum_{n \geq 1} \frac{c_{f_{k,N,D,D_0}}(n)}{n^{2k-1}} \cos(2\pi n x).
\end{align*}

\item $T_p$ is the $p$-th Hecke operator.

\item The generalized Hurwitz class numbers $H(k,1,\ell,N;n)$ are defined in Section \ref{Sec: CE series}.

\item $\mathcal{E}_f$ is the holomorphic Eichler integral of $f$.

\item $f^{*}$ is the non-holomorphic Eichler integral of $f$.

\item $W_N$ is the Fricke involution.

\item $\T^{new}$ is the Hecke-like operator defined in Subsection \ref{Sec: HL ops} which maps $S_{2k}(N)$ to $S_{2k}^{new}(N)$.

\item $\T_\nu$ is the Hecke-like operator which maps $S_{2k}^{new}(N)$ to the space generated by a single newform $f_\nu$.
\end{itemize}

\section{Preliminaries}\label{Sec: Prelims}

\subsection{Modular forms}
We recall the \emph{Petersson slash operator}
\begin{align} \label{eq:slashop}
\left(f\vert_k\gamma\right)(z) \coloneqq \begin{cases}
(cz+d)^{-k} f(\gamma z) & \text{if} \ k \in \Z, \\
\left(\frac{c}{d}\right)\varepsilon_d^{2k}(c z+d)^{-k} f(\gamma z) & \text{if} \ k \in \frac{1}{2}+\Z,
\end{cases}
\quad \gamma = \left(\begin{matrix} a & b \\ c& d \end{matrix}\right) \in \begin{cases}
\slz & \text{if } k \in \Z, \\
\Gamma_0(4) & k \in \frac{1}{2}+\Z,
\end{cases}
\end{align}
where $\left(\frac{c}{d}\right)$ denotes the extended Legendre symbol, and $\varepsilon_d \coloneqq 1,i$ if $d \equiv \pm1 \pmod{4}$ for odd integers $d$ (this is guaranteed whenever $\gamma \in \Gamma_0(4)$).
It gives rise to (weakly) holomorphic modular forms in the usual way (see \cite{cohenstromberg} for example).

We use Kohnen's normalization of the Petersson inner product throughout (see \cite{Kohnen}). For $f,g$ two cusp forms of weight $\kappa \in \frac{1}{2}\Z$ on some subgroup $\Gamma$ of finite index in $\slz$ we define
\begin{align*}
	\langle f,g \rangle \coloneqq \frac{1}{\left[ \slz \colon \Gamma \right]} \int_{\Gamma \backslash \H} f(z) \overline{g(z)} y^{\kappa} \frac{\dm x\dm y}{y^2}.
\end{align*}

Recall the concept of the newspace and oldspace of modular forms. Given a modular form $f(z) \in M_k(N,\phi)$ then $f(dz) \in M_k(M,\phi')$ for $M=dN$ and $\phi'$ the induced character. We define the oldspace $S_{k}^{old}(M,\phi)$ to be the image of $S_k(N,\phi)$ under such a map for all non-trivial divisors $d$ of $M$. The newspace $S_{k}^{new}(M,\phi)$ is the orthogonal complement of the oldspace taken in $S_k(M,\phi)$ with respect to the Petersson inner product.

We call an element $f(z) = \sum_{n \geq 1} a(n)q^n \in S_k^{new}(N)$ a \textit{newform} if it is an eigenfunction of all Hecke operators $T_p$ with $p \nmid N$ (see \eqref{eqn: Hecke operators}), and we assume throughout that we normalize to have $a(1)=1$ (such a form is called a normalized newform). It is a classical fact that newforms form a basis for the newspace.

\subsection{Integral binary quadratic forms, genus characters, and cycle integrals}
For any $D$, the group $\Gamma_0(N)$ acts on the set
\begin{align*}
\Qc_{N,D} = \{ax^2+bxy+cy^2 \in \mathcal{Q}_D \colon N \mid a\},
\end{align*}
and this action has finitely many orbits whenever $D \neq 0$. One can check that this action preserves the discriminant $D$, and that it is compatible with the action of $\Gamma$ on $\H$ by fractional linear transformations
in the sense that
\begin{align*}
\left(Q \circ \left(\begin{smallmatrix} a & b \\ c & d \end{smallmatrix}\right)\right)(z,1) = (cz+d)^2 Q(\gamma z,1).
\end{align*}

We refer to the exposition of Gross, Kohnen, Zagier \cite{grokoza}*{p.\ 508} to introduce a genus character $\chi_{D_0}$ for level $N\geq 1$ and $D_0 \neq 0$. Moreover, $\chi_{D_0}$ is $\Gamma_0(N)$-invariant and thus the genus character descends to $\mathcal{Q}_{D_0} \slash \Gamma_0(N)$. If $D_0=1$, the character is trivial. We have
\begin{align} \label{eq:genusrules}
\chi_{D_0}(-Q) = \operatorname{sgn}(D_0)\chi_{D_0}(Q), \qquad \chi_{D_{0}}([a,b,c]) = \chi_{D_{0}}([a,-b,c]).
\end{align}
The genus character satisfies a certain multiplicativity property, which can be found in \cite{grokoza}*{Proposition 1}, and as is noted in \cite{grokoza}*{P2 and P4} it is invariant under the action of the Atkin--Lehner involutions; and in particular the Fricke involution.

Central to Kohnen's results given in Proposition \ref{Prop: kohnen} and Proposition \ref{Cor: kohnen} are cycle integrals of modular forms, and so we recall the definitions here. For more background, see e.g., \cites{DIT,duimto10} and the references therein. Let $D > 0$ be a non-square discriminant. 
Let $h$ be a smooth function which transforms like a modular form of weight $2k$. Then, the \emph{weight $k$ cycle integral of $h$} is defined as
\begin{align*}
\int_{\Gamma_Q \backslash S_Q} h(z) Q(z,1)^{k-1} \dm z, \qquad S_{[a,b,c]} \coloneqq \{z \in \H \colon a\vt{z}^2+b\re(z)+c=0\}.
\end{align*}
The orientation of this integral is counterclockwise if $a > 0$, and clockwise if $a < 0$, where $a \neq 0$ is the first coefficient of $Q$. One can verify that the cycle integral depends only on the equivalence class of $Q$ and that it is invariant under modular substitutions. Thus, $\Gamma_Q \backslash S_Q$ projects to a closed circle in a fundamental domain for $\Gamma_0(N)$. For fundamental discriminants $(-1)^kD, (-1)^kD_0>0$ and $f \in S_{2k}(N)$, the twisted trace of cycle integrals of $f$ is given by
\begin{equation*}
r_{k,N,D,D_0}(f) \coloneqq \sum_{Q \in \Gamma_{0}(N) \backslash \mathcal{Q}_{N,DD_0}} \chi_{D_0}(Q) \int_{\Gamma_Q \backslash S_Q} f(z) Q(z,1)^{k-1} \dm z.
\end{equation*}

\subsection{\texorpdfstring{$L$}{L}-functions and \texorpdfstring{$L$}{L}-values}\label{Sec: L-functions prelims}
We collect some standard properties of $L$-functions associated to modular forms and their quadratic twists, both of which are central objects in the present paper. We follow the expositions by Miyake \cite{miyake}*{Section 4.3} and Zagier \cite{zagier1}*{p.\ 1150}. 

Let $k$, $N \in \N$, and
\begin{align*}
f(z) = \sum_{n\geq 0} a_f(n) \mathrm{e}^{2\pi i n z}
\end{align*}
be holomorphic on $\H$, such that 
\begin{enumerate}[leftmargin=*]
\item its Fourier expansion converges absolutely and uniformly on any compact subset of $\H$,
\item there exists some $\varepsilon > 0$ with $f(z) \in O\left(\im(z)^{-\varepsilon}\right)$ as $\im(z) \to 0$ uniformly in $\re(z)$.
\end{enumerate}
We define the $L$-function associated to $f$ as
\begin{align*}
L(f,s) \coloneqq \sum_{n \geq 1} \frac{a_f(n)}{n^s}, \qquad \re(s) > 1+\varepsilon.
\end{align*}

Since modular forms can be twisted by a Dirichlet character $\chi$ (not to be confused with a genus character), we obtain twisted $L$-functions
\begin{align*}
L(f\otimes\chi,s) \coloneqq \sum_{n \geq 1} \frac{a_f(n)\chi(n)}{n^s}.
\end{align*}
One may also consider $L$-functions associated to a Dirichlet character $\chi_D=\left(\frac{D}{\cdot}\right)$, namely
\begin{align*}
L(s,\chi) \coloneqq \sum_{n \geq 1} \frac{\chi(n)}{n^s} = \prod_{p \text{ prime}} \frac{1}{1-\chi(p)p^{-s}}, \qquad \re(s) > 1.
\end{align*}
They satisfy a certain functional equation which continues $L(s,\chi)$ meromorphically to the whole $s$-plane. (If $\chi$ is principal then $L(s,\chi)$ has a simple pole at $s=1$.) We reserve the notation $L_N(s,\chi)$ for the incomplete $L$-function
\begin{align*}
L_N(s,\chi) \coloneqq \sum_{\gcd(n,N) = 1} \frac{\chi(n)}{n^s} = \prod_{\substack{p \text{ prime} \\ p \nmid N}} \frac{1}{1-\chi(p)p^{-s}}, \qquad \re(s) > 1.
\end{align*}

\subsection{Work of Kohnen}
Following work of Kohnen \cite{Kohnen} and Zagier \cites{zagier75}, we recall the function 
\begin{align*}
f_{k,N,D,D_0}(z) = \sum_{Q \in \Qc_{N,DD_0}} \frac{\chi_{D_{0}}(Q)}{Q(z,1)^k}, \qquad k \geq 2, \qquad N \in \N, \qquad DD_0 > 0.
\end{align*}
\begin{prop}[\protect{\cite{Kohnen}*{Proposition 7}}]\label{Prop: kohnen}
Let $f\in S_{2k}(N)$. Then, we have that
\begin{align*}
\langle f,f_{k,N,D,D_0}\rangle = \frac{\pi\binom{2k-2}{k-1}2^{2-2k}\left(\vt{DD_0}\right)^{\frac{1}{2}-k}}{\left[ \mathrm{SL}_2(\Z) \colon \Gamma_0(N) \right]}r_{k,N,D,D_0}(f).
\end{align*}
\end{prop}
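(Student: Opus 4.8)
The plan is to compute the inner product by a Rankin--Selberg unfolding against the $\Gamma_0(N)$-orbit decomposition of $f_{k,N,D,D_0}$, and then to reduce each orbit integral to a cycle integral. First I would group the sum defining $f_{k,N,D,D_0}$ into $\Gamma_0(N)$-orbits: writing $\phi_{Q_0}(z)\coloneqq Q_0(z,1)^{-k}$ and using the compatibility $(Q_0\circ\gamma)(z,1)=(cz+d)^2 Q_0(\gamma z,1)$, the orbit sum of a representative $Q_0$ becomes the Poincar\'e-type series $\sum_{\gamma\in\Gamma_{Q_0}\backslash\Gamma_0(N)}(\phi_{Q_0}\vert_{2k}\gamma)(z)$, where $\Gamma_{Q_0}$ is the group of automorphs. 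Since $\chi_{D_0}$ is constant on $\Gamma_0(N)$-orbits it factors out of each orbit sum. One checks that the seed integrand $\Phi(z)\coloneqq f(z)\overline{Q_0(z,1)}^{-k}\im(z)^{2k}$ is $\Gamma_{Q_0}$-invariant: for $\gamma\in\Gamma_{Q_0}$ one has $f\vert_{2k}\gamma=f$ and $Q_0\circ\gamma=Q_0$, and the competing automorphy factors $(cz+d)$, $\overline{(cz+d)}$ and $\vt{cz+d}$ cancel exactly. The standard unfolding lemma then converts $\langle f,f_{k,N,D,D_0}\rangle$ into $i_N^{-1}\sum_{Q_0}\chi_{D_0}(Q_0)\,I_{Q_0}$, where the sum runs over $\Gamma_0(N)$-orbit representatives and $I_{Q_0}\coloneqq\int_{\Gamma_{Q_0}\backslash\H} f(z)\overline{Q_0(z,1)}^{-k}\im(z)^{2k-2}\,dx\,dy$.

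The heart of the argument is evaluating $I_{Q_0}$ in terms of the cycle integral $C_{Q_0}\coloneqq\int_{\Gamma_{Q_0}\backslash S_{Q_0}} f(z) Q_0(z,1)^{k-1}\,dz$. To this end I would pick $\sigma\in\mathrm{SL}_2(\R)$ sending the two real roots of $Q_0$ to $0$ and $\infty$, so that $S_{Q_0}$ becomes the imaginary axis; since $\sigma$ preserves the discriminant, $Q_0\circ\sigma$ is the form $z\mapsto\sqrt{DD_0}\,z$. Substituting $z=\sigma u$ and writing $g\coloneqq f\vert_{2k}\sigma$ (holomorphic on $\H$), the automorphy factors telescope again, and in polar coordinates $u=\rho e^{i\varphi}$ one is left with $I_{Q_0}=(DD_0)^{-k/2}\int_0^\pi \sin^{2k-2}(\varphi)\,e^{ik\varphi}\,\Psi(\varphi)\,d\varphi$, where $\Psi(\varphi)=\int_1^{\alpha^2} g(\rho e^{i\varphi})\rho^{k-1}\,d\rho$ and $\alpha^2$ is the scaling eigenvalue of the generator of $\sigma^{-1}\Gamma_{Q_0}\sigma$ (coming from Pell's equation $t^2-DD_0\,r^2=4$).

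The main obstacle is showing that the $\rho$-integral decouples from $\varphi$. I would recognize $e^{ik\varphi}\Psi(\varphi)=\int g(u)u^{k-1}\,du$ as a contour integral of the holomorphic integrand $g(u)u^{k-1}$ along the radial segment from $e^{i\varphi}$ to $\alpha^2 e^{i\varphi}$, and then deform this contour to the ray at angle $\tfrac{\pi}{2}$ by Cauchy's theorem. The two boundary arcs picked up on the circles $\vt{u}=1$ and $\vt{u}=\alpha^2$ cancel against each other precisely because $g$ satisfies $g(\alpha^2 u)=\alpha^{-2k}g(u)$, which is exactly the modularity of $f$ under the generating automorph transported through $\sigma$. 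Hence $e^{ik\varphi}\Psi(\varphi)$ is independent of $\varphi$ and equals $(DD_0)^{(1-k)/2}C_{Q_0}$, so that $I_{Q_0}=(DD_0)^{-k/2}(DD_0)^{(1-k)/2}C_{Q_0}\int_0^\pi\sin^{2k-2}(\varphi)\,d\varphi$.

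Finally I would evaluate the Beta integral $\int_0^\pi\sin^{2k-2}(\varphi)\,d\varphi=\pi\binom{2k-2}{k-1}2^{2-2k}$ and combine the powers of $DD_0$ (which sum to $\tfrac{1}{2}-k$), giving $I_{Q_0}=\pi\binom{2k-2}{k-1}2^{2-2k}\vt{DD_0}^{\frac{1}{2}-k}C_{Q_0}$. Summing $\chi_{D_0}(Q_0) I_{Q_0}$ over orbit representatives reassembles the twisted trace $r_{k,N,D,D_0}(f)$ and yields the claimed identity. Two points require care beyond this sketch: the orientation convention for $S_{Q_0}$ (counterclockwise for $a>0$, clockwise for $a<0$), which must be matched with the choice of $\sigma$ and the sign of $Q_0\circ\sigma$; and the absolute convergence of the Poincar\'e series together with the interchanges of summation and integration, which holds for $k>1$.
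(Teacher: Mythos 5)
Your proposal is correct, and it is essentially the proof of the cited result: the paper gives no argument of its own for this proposition (it is quoted directly as Kohnen's Proposition 7), and Kohnen's proof is exactly your classical Rankin--Selberg unfolding — orbit decomposition with the genus character factored out, reduction to the annulus fundamental domain $1\le\vt{u}<\varepsilon^2$ for the automorph group, rotation of the radial contour to the imaginary axis using the invariance of $g(u)u^{k-1}\,du$ under $u\mapsto\varepsilon^2u$, and the Wallis integral $\int_0^\pi\sin^{2k-2}\varphi\,d\varphi=\pi\binom{2k-2}{k-1}2^{2-2k}$ — going back to Zagier's level-one computation. The constants and the exponent $\tfrac{1}{2}-k$ on $DD_0$ check out, and the two caveats you flag (orientation conventions and absolute convergence for $k\ge2$) are precisely the points that need the bookkeeping you describe.
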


\begin{rmk}
	Although Kohnen \cite{Kohnen} worked with $N$ square-free and odd, the assumption that $N$ is odd is neither needed in his Proposition $7$ nor in his Corollary $3$. Moreover, Ueda and Yamana \cite{ueya} extended Kohnen's results on the Shimura / Shintani isomorphism between $S_{2k}(N)$ and $S_{k+\frac{1}{2}}^+(4N)$ to even levels $N$ including their Hecke equivariance.
\end{rmk}

Moreover, we require the following connection.
\begin{prop}[\protect{\cite{Kohnen}*{Corollary 3}}]\label{Cor: kohnen}
Let $f(z) = \sum_{n \geq 1} a(n)q^n \in S_{2k}^{new}(N)$ be a Hecke eigenform normalized such that $a(1) =1$. Let $D$ and $D_0$ be fundamental discriminants with $(-1)^kD,(-1)^kD_0>0$, and $\left(\frac{D}{\ell}\right)=\left(\frac{D_0}{\ell}\right)=w_{\ell}$ for all primes $\ell\mid N$, where $w_{\ell}$ is the eigenvalue of $f$ under the Atkin-Lehner involution $W_{\ell}$. Then
\begin{align*}
\left(DD_0\right)^{k-1/2}L(f\otimes\chi_D,k)L(f\otimes\chi_{D_0},k)
= \frac{(2\pi)^{2k}}{(k-1)!^2}2^{-2\nu(N)} \vt{r_{k,N,D,D_0}(f)}^2,
\end{align*}
where $\nu(N)$ denotes the number of distinct prime divisors of $N$.
\end{prop}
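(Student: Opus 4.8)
The plan is to recognize this as a \emph{product} version of the Waldspurger--Kohnen--Zagier formula and to prove it by realizing the twisted trace $r_{k,N,D,D_0}(f)$ as a Fourier coefficient of a half-integral weight Hecke eigenform, then applying the single-discriminant Waldspurger identity twice. First I would recall from the explicit formula for the Shintani lift that the $D$-th Fourier coefficient of the $D_0$-th Shintani lift $f\,\vert\,\mathscr{S}_{k,N,D_0}^{*}$ is the $t$-summed trace $\sum_{t\mid N}\mu(t)\chi_{D_0}(t)t^{k-1}r_{k,Nt,(-1)^kDt^2,D_0}(f)$. Under the hypotheses $\left(\frac{D}{\ell}\right)=\left(\frac{D_0}{\ell}\right)=w_\ell$ for all $\ell\mid N$, the Atkin--Lehner eigenvalue relations together with the genus-character behaviour force each inner trace to be a fixed multiple of $r_{k,N,D,D_0}(f)$, so the $t$-sum collapses to $r_{k,N,D,D_0}(f)$ up to an explicit power of $2$ indexed by the prime divisors of $N$. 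This collapse --- and the resulting powers of $2$ that eventually produce the factor $2^{-2\nu(N)}$ after squaring --- is the place where I expect the bookkeeping to be most delicate.

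Next I would pin the Shintani lift down as an explicit scalar multiple of the Shimura-correspondent form. Let $g\in S^+_{k+\frac{1}{2}}(4N)$ be the half-integral weight Hecke eigenform matching the newform $f$ under the Shimura correspondence. Reading off the first Fourier coefficient of the Shimura lift and using the normalization $a_f(1)=1$ shows that the $D_0$-th Shimura lift sends $g$ to $c_g(\vt{D_0})\,f$. Since the Shintani lift is the Petersson adjoint of the Shimura lift by \eqref{eq:ShimuraShintaniOmega}, a short adjointness computation gives $f\,\vert\,\mathscr{S}_{k,N,D_0}^{*}=\overline{c_g(\vt{D_0})}\,\frac{\langle f,f\rangle}{\langle g,g\rangle}\,g$. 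Comparing $D$-th coefficients then yields $r_{k,N,D,D_0}(f)\doteq c_g(\vt{D})\,\overline{c_g(\vt{D_0})}\,\frac{\langle f,f\rangle}{\langle g,g\rangle}$, so that $\vt{r_{k,N,D,D_0}(f)}^2$ is proportional to $\vt{c_g(\vt{D})}^2\,\vt{c_g(\vt{D_0})}^2\,\langle f,f\rangle^2/\langle g,g\rangle^2$.

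Finally I would invoke the single-discriminant Waldspurger--Kohnen identity $\vt{c_g(\vt{D})}^2\doteq \vt{D}^{k-\frac{1}{2}}L(f\otimes\chi_D,k)\,\langle g,g\rangle/\langle f,f\rangle$, which holds at square-free level $N$ by the work of Ueda--Yamana cited above, applied to both $D$ and $D_0$. Substituting these into the previous display, the Petersson norms cancel and the powers of $\vt{D}$ and $\vt{D_0}$ combine into $(DD_0)^{k-\frac{1}{2}}$, leaving $\vt{r_{k,N,D,D_0}(f)}^2\doteq (DD_0)^{k-\frac{1}{2}}L(f\otimes\chi_D,k)L(f\otimes\chi_{D_0},k)$, which is the asserted identity up to a constant; this is also the conceptual content behind \eqref{eq:KohnenCor1}.

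The main obstacle is not the conceptual chain but the \emph{uniform} tracking of normalizations needed to produce exactly $(2\pi)^{2k}/(k-1)!^2$ and $2^{-2\nu(N)}$. Here I would lean on Proposition \ref{Prop: kohnen} to convert freely between the trace $r_{k,N,D,D_0}(f)$ and the inner product $\langle f,f_{k,N,D,D_0}\rangle$, whose prefactor $\pi\binom{2k-2}{k-1}2^{2-2k}\vt{DD_0}^{\frac{1}{2}-k}$ must be reconciled with the kernel constant $c_{k,D_0}$ from \eqref{eq:ckD0def} and the chosen normalization of $g$. Keeping all of these factors consistent, together with the $t$-sum collapse, is the delicate step that pins the final proportionality constant.
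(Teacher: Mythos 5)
This statement is never proved in the paper: it is quoted verbatim as Kohnen's Corollary~3, the only added content being the remark that Kohnen's assumption that $N$ be odd is unnecessary here and that Ueda--Yamana extend the relevant Shimura/Shintani theory to even square-free levels. So the right comparison is with Kohnen's own derivation, and your proposal reproduces it in all essentials: the two-coefficient identity $r_{k,N,D,D_0}(f)\doteq a_g(\vt{D})\,\overline{a_g(\vt{D_0})}\,\langle f,f\rangle/\langle g,g\rangle$ obtained from Shimura--Shintani adjointness (this is Kohnen's Theorem~3), followed by two applications of the single-discriminant formula (Proposition~\ref{Prop24}, which in Kohnen's paper is Corollary~1 and is logically prior to the present statement, so there is no circularity). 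The constants do close up: with the kernel normalization \eqref{eqn: Omega as f and P} and \eqref{eq:ckD0def} together with Proposition~\ref{Prop: kohnen}, the proportionality constant in the two-coefficient identity is $(-1)^{\lfloor k/2\rfloor}2^{-k}$, whose square combines with the factors $\pi^{-2k}$, $(k-1)!^{2}$ and $2^{2\nu(N)}$ coming from squaring Proposition~\ref{Prop24} to give exactly $(2\pi)^{2k}(k-1)!^{-2}\,2^{-2\nu(N)}$.

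The one step you should redo is the ``collapse'' of the $t$-sum in the explicit Shintani formula. Your claim that each inner trace $r_{k,Nt,(-1)^kDt^2,D_0}(f)$ is individually a fixed multiple of $r_{k,N,D,D_0}(f)$ is asserted, not proved, and it is not the right mechanism: the forms of discriminant $DD_0t^2$ with $Nt\mid a$ do not biject onto rescalings of forms of discriminant $DD_0$, so no term-by-term proportionality is available. The clean route, entirely inside the paper's toolkit, is to read off the $D$-th coefficient of $f\vert\mathscr{S}^{*}_{k,N,D_0}$ from the kernel: it equals $i_N c_{k,D_0}^{-1}D^{k-\frac12}\langle f,h_{k,N,D,D_0}\rangle$, and since every $t>1$ constituent of $h_{k,N,D,D_0}$ is an oldform at level $N$, newform orthogonality gives $\langle f,h_{k,N,D,D_0}\rangle=\langle f,f_{k,N,D,D_0}\rangle$ (the paper itself invokes this fact, citing Kohnen, in Section~\ref{subsec:RankinSelberg}); Proposition~\ref{Prop: kohnen} then converts this inner product into $r_{k,N,D,D_0}(f)$ with the constant recorded above. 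With that substitution your argument is complete and yields the stated identity with its exact constants.
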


\begin{rmk}
In \cite{egkr} the authors misquoted Kohnen's result for all cusp forms in $S_{2k}(N)$. This does not affect their results as they considered only dimension one spaces $S_{2k}(N)$ so that the space is generated by a single newform.
\end{rmk}

\subsection{Locally harmonic Maass forms}
Let $f(z) = \sum_{n\geq 1} a_f(n)q^n \in S_k(N)$. We define the \emph{holomorphic Eichler integral} of $f$ by
\begin{align} \label{eq:Eichlerholo}
\mathcal{E}_f(z) &\coloneqq \sum_{n \geq 1} \frac{a_f(n)}{n^{k-1}} q^n = - \frac{(2\pi i)^{k-1}}{(k-2)!}\int_{z}^{i\infty} f(w)(z-w)^{k-2} \dm w
\end{align}
and the \textit{non-holomorphic Eichler integral} of $f$ by
\begin{align} \label{eq:Eichlernonholo}
f^{*}(z) &\coloneqq \sum_{n \geq 1} \frac{\overline{a_f(n)}}{(2\pi n)^{k-1}} \Gamma(k-1,4\pi n \im(z)) q^{-n} = (2i)^{1-k}\int_{-\overline{z}}^{i\infty} \overline{f(-\overline{w})}\left(-i(w+z)\right)^{k-2} \dm w,
\end{align}
where $\Gamma(s,z)$ is the usual incomplete gamma function.
 
Furthermore, we recall $\xi_k \coloneqq 2i \im(z)^k \overline{\frac{\partial}{\partial \overline{z}}}$ from \eqref{eq:xidef}, and the \emph{Bol operator} $\mathbb{D}^{k-1}$ with $\mathbb{D} \coloneqq \frac{1}{2\pi i} \frac{\partial}{\partial z}$ from \eqref{eq:boldef}. Then, one can verify straightforwardly that
\begin{equation} \label{eq:surjectivity}
\begin{split}
\begin{aligned}
&\xi_{2-k} \left(f^*(z)\right) = f(z), \qquad &&\mathbb{D}^{k-1} \left(f^*(z)\right) = 0, \\
&\xi_{2-k} \left(\mathcal{E}_{f}(z)\right) = 0, &&\mathbb{D}^{k-1} \left(\mathcal{E}_{f}(z)\right) = f(z).
\end{aligned}
\end{split}
\end{equation}

Let $DD_0 > 0$ be a non-square discriminant. Following work of Bringmann, Kane, and Kohnen \cite{bkk} and an extension by Ehlen, Guerzhoy, Kane and the third author \cite{egkr}, we define
\begin{align*}
\Fc_{1-k,N,D,D_0}(z) \coloneqq \frac{1}{2}\sum_{Q \in \Qc_{N,DD_0}} \chi_{D_{0}}(Q) \operatorname{sgn}\left(Q_z\right)Q(z,1)^{k-1}\beta\left(\frac{DD_0\im(z)^2}{\vt{Q(z,1)}^2};k-\frac{1}{2},\frac{1}{2}\right), 
\end{align*}
for $z \in \H \setminus E_{N,DD_0}$, where $\beta(x;r,s)$ denotes the incomplete $\beta$-function and
\begin{align*}
E_{N,DD_0} &\coloneqq \bigcup_{Q \in \Qc_{N,DD_0}} S_Q, \qquad [a,b,c]_z \coloneqq \frac{1}{\im(z)}\left(a\vt{z}^2+b\re(z)+c\right).
\end{align*}
The function $\Fc_{1-k,N,D,D_0}$ is the archetypal example of a so-called \emph{locally harmonic Maass form}. Such forms are modular of some weight (here $2-2k$), harmonic outside $E_{N,DD_0}$ with respect to the hyperbolic Laplace operator of the same weight, satisfy a certain limit condition on $E_{N,DD_0}$, and are of polynomial growth towards the cusps of $\Gamma_{0}(N)$. A full definition can be found in \cite{bkk}*{Section 2} in the case of level $1$ and in \cite{egkr}*{Section 2} in the case of level $N$.

In addition, their key feature is that they admit a splitting in terms of Eichler integrals of the {\it same} cusp form and a local polynomial. In the case of $\Fc_{1-k,N,D,D_0}$, this local polynomial is explicitly given by
\begin{align}
P_{k,N,D,D_0}(z) &\coloneqq c_\infty(N, DD_0) + (-1)^{k-1} \binom{2k-2}{k-1} \pi 2^{2-2k} \sum_{\substack{Q=[a,b,c] \in \mathcal{Q}_{N,DD_0} \\ a < 0 < Q_z}} \chi_{D_{0}}(Q) Q(z,1)^{k-1}, \label{locPolydefn} \\ 
c_{\infty}(N, DD_0) &\coloneqq \frac{(DD_0)^{k-\frac{1}{2}}}{(2k-1)}\pi 2^{2-2k}\sum_{\substack{a \geq 1 \\ N \mid a}} \frac{1}{a^k} \sum_{\substack{0 \leq b < 2a \\ b^2 \equiv DD_0 \pmod{4a}}} \chi_{D_{0}}\left[a,b,\frac{b^2-DD_0}{4a}\right]. \label{cinftydefn}
\end{align}

Adapting \cite{bkk}*{Theorem 7.1} to our framework straightforwardly yields the following result.
\begin{prop}[\cite{bkk}*{Theorem 7.1}] \label{prop:Fcsplitting}
If $z \in \H \setminus E_{N,DD_0}$ then
\begin{align*}
\mathcal{F}_{1-k,N,D,D_0}(z) = P_{k,N,D,D_0}(z) - (DD_0)^{k-\frac{1}{2}}\frac{(2k-2)!}{(4\pi)^{2k-1}}\mathcal{E}_{f_{k,N,D,D_0}}(z) + (DD_0)^{k-\frac{1}{2}}f_{k,N,D,D_0}^*(z).
\end{align*}
\end{prop}
By \eqref{eq:surjectivity}, this splitting implies that $\mathcal{F}_{1-k,N,D,D_0}$ maps to $f_{k,N,D,D_0}$ under both $\mathbb{D}^{2k-1}$ and $\xi_{2-2k}$. Moreover, a very recent result by Bringmann, the second and third author \cite{bmr} shows that $\mathcal{F}_{1-k,N,D,D_0}$ has eigenvalue $-1$ under the so-called flipping operator.

We further require the classical Hecke operators $T_p$ with $p$ prime such that $p \nmid N$. Recall that the action of $T_p$ on a translation invariant function $h$ is given by
\begin{align}\label{eqn: Hecke operators}
\left(T_p\vert_{2-2k} h \right)(w) = p^{1-2k} h(pw) + p^{-1} \sum_{b \pmod{p}} h\left(\frac{w + b}{p}\right), \qquad w \in \H \cup \R,
\end{align}
see \cite{diashu}*{Proposition 5.2.1} for instance.

\begin{prop} \label{prop:HeckeOnfkND}
Let $p$ be a prime such that $p \nmid N$.
\begin{enumerate}[leftmargin=*, label={\rm (\roman*)}]
\item We have
\begin{align*}
\hspace*{\leftmargini} T_p \vert_{2k} f_{k,N,D,D_0} (z) = f_{k,N,Dp^2,D_0}(z) + p^{k-1}\left(\frac{D}{p}\right)f_{k,N,D,D_0}(z) + p^{2k-1}f_{k,N,\frac{D}{p^2},D_0}(z).
\end{align*}
\item We have
\begin{multline*}
\hspace*{\leftmargini} T_p \vert_{2-2k} \Fc_{1-k,N,D,D_0} (z) \\
= \Fc_{1-k,N,Dp^2,D_0}(z) + p^{-k} \left(\frac{D}{p}\right) \Fc_{1-k,N,D,D_0} (z)+ p^{1-2k} \Fc_{1-k,N,\frac{D}{p^2},D_0} (z).
\end{multline*}
\end{enumerate}
Here, $f_{k,N,\frac{D}{p^2},D_0}$ and $\Fc_{1-k,N,\frac{D}{p^2},D_0}$ are understood to be $0$ if $p^2\nmid D$.
\end{prop}

\begin{proof}
The special case of (i) with $N=1$ and $D_0=1$ goes back to Parson \cite{parson}*{Theorem 4.1}. Adapting her argument for $N=1$ and $D_0=1$ shows (ii), see \cite{bkk}*{Theorem 1.5}. The reader is also pointed to the proof of \cite{zagier2}*{(36)} for an analogous proof of part (i).  The treatment of the genus character can be found in \cite{beng}*{Proposition 2.2} using \cite{zagier2}*{p.\ $292$}. Parson's argument can be generalized to $N > 1$ straightforwardly.
\end{proof}

\subsection{Generalized Cohen--Eisenstein series}\label{Sec: CE series}
Let $k > 1$. Let $N \in \N$ be odd and square-free. We define
\begin{align*}
	\sigma_{\ell,s}(n) &\coloneqq \sum_{\substack{0 < r \mid n \\ \gcd(\ell,r)=1}} r^s, \qquad \chi_t(a) \coloneqq \left(\frac{t}{a}\right),
	\qquad \sigma_{\ell,N,s}(n) \coloneqq \sum_{\substack{0 < r \mid n \\ \gcd(\ell,r)=1 \\ \gcd\left(\frac{n}{r},\frac{N}{\ell}\right)=1}} r^s.
\end{align*}
Let $\ell \mid N$, $\ell \neq N$. Pei and Wang \cite{PeiWang}*{p.\ 103} introduce \emph{generalized Hurwitz class numbers} as
\begin{align*}
	H(k,1,N,N;n) 
	\coloneqq \begin{cases}
		L_N\left(1-2k,\id\right) & \text{if } n=0, \\
		L_N(1-k,\chi_t) \sum\limits_{\substack{a \mid m \\ \gcd(a,N) = 1}} \mu(a) \chi_t(a) a^{k-1} \sigma_{N,2k-1}\left(\frac{m}{a}\right) & \begin{array}{@{}l} \text{if } (-1)^kn=tm^2, \\ t \text{ fundamental}, \end{array}  \\
		0 & \text{else},
	\end{cases}
\end{align*}
as well as
\begin{multline*}
	H(k,1,\ell,N;n) \\
	\coloneqq \begin{cases}
		0 & \text{if } n=0, \\
		L_{\ell}\left(1-k,\chi_{t}\right) \prod\limits_{p\mid \frac{N}{\ell}} \frac{1-\chi_t(p)p^{-k}}{1-p^{-2k}} \sum\limits_{\substack{a \mid m \\ \gcd(a,N) = 1}} \mu(a) \chi_t(a) a^{k-1} \sigma_{\ell,N,1}\left(\frac{m}{a}\right) & \begin{array}{@{}l} \text{if } (-1)^kn=tm^2, \\ t \text{ fundamental}, \end{array} \\
		0 & \text{else}.
	\end{cases}
\end{multline*}

\begin{rmk}
	Here we include the summation condition that $\gcd(a,N)=1$ in the definition of $H(k,1,N,N;n)$. This condition arises naturally during the proof of Theorem \ref{thm:cinftyexplicit} below, see also \cite{bemo}*{p.\ 14} for further justification.
\end{rmk}

These numbers appear as the Fourier coefficients of the \emph{generalized Cohen--Eisenstein series}
\begin{align*}
\sum_{n \geq 0} H(k,1,\ell,N;n) q^n \in E_{k+\frac{1}{2}}^+(4N), \qquad \ell \mid N.
\end{align*}
Here, $E_\kappa^+(N)$ denotes the Eisenstein plus space of weight $\kappa$ and level $N$. The case of $k=1$ was discussed in \cites{bemo, bemo2} recently, the case $k > 1$ can be found in \cite{mo3}.

\section{Local Polynomials and period-like polynomials} \label{Sec: local polys}

\subsection{Local polynomials}
Let $\mathcal{C}_{\mathfrak{a}}$ denote the connected component of $\H \setminus E_{N,DD_0}$ containing the cusp $\mathfrak{a} \in \Q \cup \{i\infty\}$ on its boundary. We recall the local polynomial
\begin{align*}
P_{k,N,D,D_0}(z) = c_\infty(N, DD_0) + (-1)^{k-1} \binom{2k-2}{k-1} \pi 2^{2-2k} \sum_{\substack{Q=[a,b,c] \in \mathcal{Q}_{N,DD_0} \\ a < 0 < Q_z}} \chi_{D_{0}}(Q) Q(z,1)^{k-1}
\end{align*}
from \eqref{locPolydefn}. It captures the singularities of $\Fc_{1-k,N,D,D_0}$ along the geodesics in $E_{N,DD_0}$.

Following \cite{brimo1}*{Section 2}, We say that a function $f\colon\H\setminus E_{N,DD_0}\to\C$ has {\it jumping singularities} on $E_{N,DD_0}$ if there exists $z \in E_{N,DD_0}$ such that
\begin{align*}
\lim_{\varepsilon\to 0^+} (f(z+i\varepsilon)-f(z-i\varepsilon)) \in \C \setminus \{0\}.
\end{align*}
Note that this limit might depend on the geodesic $S_Q$ on which $z$ is located. Hence, the shape of $P_{k,N,D,D_0}$ depends on the connected component of $\H \setminus E_{N,DD_0}$ in which $z$ is located, and we make this more precise now. Since $DD_0$ is not a square, each geodesic $S_Q$ divides $\H$ into a bounded and an unbounded component, and we denote the bounded component (``interior'') of $\H\backslash S_Q$ by $A_Q$. Since the class number $h(DD_0)$ is finite, there is precisely one unbounded connected component in a fundamental domain for $\Gamma_0(N)$ and this component contains the cusp $i\infty$ on its boundary. Hence, we refer to this connected component as $\mathcal{C}_{i\infty}$. If $z \in \H$ satisfies $y > \frac{\sqrt{DD_0}}{2N}$ then $z \in \mathcal{C}_{i\infty}$, because $S_{[a,b,c]}$ is a semicircle centered at $-\frac{b}{2a}$ and with radius $\frac{\sqrt{DD_0}}{2\vt{a}}$ (recall that $N \mid a$). We identify $\mathcal{C}_{i\infty}$ with all the $\Gamma_0(N)$-equivalent points on $\H$ subsequently. 

We consider the characteristic function
\begin{align*}
\mathbbm{1}_Q(z) \coloneqq \begin{cases}
1 & \text{ if } z \in A_Q, \\
0 & \text{ if } z \not\in A_Q,
\end{cases}
\end{align*}
of $A_Q$ whenever $z \in \H \setminus E_{DD_0}$, see \cite{schw18}*{Corollary 5.3.5} too. Moreover, we let
\begin{align*}
\mathrm{sgn}\left([a,b,c]\right) \coloneqq \mathrm{sgn}(a).
\end{align*}
We begin by citing the following lemma.
\begin{lemma}[\cite{mo2}*{Lemma 2.1 (iii)}]
If $z \in \H \setminus E_{DD_0}$ then
\begin{align*}
\mathrm{sgn}\left(Q_{z}\right) = \mathrm{sgn}(Q) \left(1-2\mathbbm{1}_Q(z)\right).
\end{align*}
\end{lemma}
Hence, if $z \in \mathcal{C}_{i\infty}$ then $\mathrm{sgn}\left(Q_{z}\right) = \mathrm{sgn}(Q)$ and hence the local polynomial reduces to
\begin{align*}
P_{k,N,D,D_0}(z) = c_\infty(N, DD_0)
\end{align*}
for such $z$. This generalizes \cite{bkk}*{(7.6)} to higher levels.

A classical fact is that a (globally defined) translation invariant polynomial has to be constant. However, this is no longer true in the case of local polynomials. 
\begin{lemma} \label{lem:locPolytranslation}
The local polynomial $P_{k,N,D,D_0}$ is translation invariant. In other words, we have $P_{k,N,D,D_0}(z+1) = P_{k,N,D,D_0}(z)$ for every $z, z+1 \in \H \setminus E_{N,DD_0}$.
\end{lemma}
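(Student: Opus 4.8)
The plan is to realise the shift $z \mapsto z+1$ as a reindexing of the sum in \eqref{locPolydefn} by the $\Gamma_0(N)$-action on quadratic forms. Since the constant $c_\infty(N,DD_0)$ from \eqref{cinftydefn} is independent of $z$, it suffices to prove that
\[
\sum_{\substack{Q=[a,b,c] \in \Qc_{N,DD_0} \\ a < 0 < Q_z}} \chi_{D_0}(Q)\, Q(z,1)^{k-1}
\]
is invariant under $z \mapsto z+1$. The key observation is that the translation matrix $T = \left(\begin{smallmatrix} 1 & 1 \\ 0 & 1 \end{smallmatrix}\right)$ has lower-left entry $0$, hence lies in $\Gamma_0(N)$ for every $N$. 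Consequently $Q \mapsto Q \circ T$ is a bijection of $\Qc_{N,DD_0}$ onto itself, and since $\chi_{D_0}$ is $\Gamma_0(N)$-invariant (it descends to $\Qc_{DD_0}/\Gamma_0(N)$) we have $\chi_{D_0}(Q) = \chi_{D_0}(Q\circ T)$.

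Next I would track the summand and the two summation conditions through this bijection. A direct computation gives $Q \circ T = [a,\, 2a+b,\, a+b+c]$, so the leading coefficient is unchanged and the condition $a < 0$ is preserved. The compatibility of the $\Gamma$-action on forms with the action on $\H$ (namely $(Q\circ\gamma)(z,1) = (cz+d)^2 Q(\gamma z,1)$) specialises for $\gamma = T$, where $cz+d = 1$, to $(Q\circ T)(z,1) = Q(z+1,1)$; hence the summand transforms as $Q(z+1,1)^{k-1} = (Q\circ T)(z,1)^{k-1}$.

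It remains to transport the positivity condition $0 < Q_z$. Writing $z+1 = (x+1)+iy$ and using $\vt{z+1}^2 = \vt{z}^2 + 2x + 1$, one computes directly that
\[
Q_{z+1} = \frac{1}{y}\bigl(a\vt{z}^2 + (2a+b)x + (a+b+c)\bigr) = (Q\circ T)_z,
\]
so the condition $a < 0 < Q_{z+1}$ on $Q$ is exactly the condition $a < 0 < (Q\circ T)_z$ on $Q\circ T$. (Equivalently, this follows from the general relation $(Q\circ\gamma)_z = Q_{\gamma z}$, which can be read off from the identity $\vt{Q(z,1)}^2 = y^2\bigl(Q_z^2 + DD_0\bigr)$ together with the transformation $\im(\gamma z) = y/\vt{cz+d}^2$.) Reindexing by $Q' = Q\circ T$ therefore yields a bijection between $\{Q : a < 0 < Q_{z+1}\}$ and $\{Q' : a < 0 < Q'_z\}$ under which $\chi_{D_0}(Q)\,Q(z+1,1)^{k-1} = \chi_{D_0}(Q')\,Q'(z,1)^{k-1}$ termwise, giving equality of the two sums and hence $P_{k,N,D,D_0}(z+1) = P_{k,N,D,D_0}(z)$.

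I expect the only genuine point of care to be the verification $Q_{z+1} = (Q\circ T)_z$, i.e.\ that the sign condition cutting out the active cone is carried correctly by the reindexing; the rest is bookkeeping. Convergence is not an obstacle here, since for each fixed $z \in \H \setminus E_{N,DD_0}$ the sum defining $P_{k,N,D,D_0}$ is finite (by Zagier's finiteness result), so the rearrangement involves only finitely many terms and is automatically valid.
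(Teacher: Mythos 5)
Your proof is correct, but it takes a different route from the one the paper actually gives. The paper deduces translation invariance structurally: by the splitting \eqref{eqn: F splitting}, $P_{k,N,D,D_0}$ equals $\mathcal{F}_{1-k,N,D,D_0}$ minus the two Eichler integral terms, and since $\mathcal{F}_{1-k,N,D,D_0}$ is modular (hence invariant under $z\mapsto z+1$) while $\mathcal{E}_{f_{k,N,D,D_0}}$ and $f^*_{k,N,D,D_0}$ have Fourier expansions in $q$ (hence are also translation invariant), the local polynomial must be as well. Your argument is instead the direct computational one --- reindexing the sum by $Q \mapsto Q\circ T$ with $T = \left(\begin{smallmatrix}1 & 1 \\ 0 & 1\end{smallmatrix}\right) \in \Gamma_0(N)$, checking that the leading coefficient (hence the conditions $N \mid a$ and $a<0$), the genus character, the summand via $(Q\circ T)(z,1) = Q(z+1,1)$, and the cone condition via $Q_{z+1} = (Q\circ T)_z$ are all transported correctly --- which is precisely the alternative the paper points to but does not write out (it cites Lemma 2.1(i) of the second author's earlier work for it). Your version is more self-contained: it does not presuppose the splitting \eqref{eqn: F splitting} or the modularity of $\mathcal{F}_{1-k,N,D,D_0}$, both of which the paper imports from adaptations of Bringmann--Kane--Kohnen; the paper's version is shorter once that machinery is granted. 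Two small remarks: your parenthetical alternative via $\vt{Q(z,1)}^2 = y^2\left(Q_z^2+DD_0\right)$ only yields $(Q\circ\gamma)_z = \pm Q_{\gamma z}$, so the sign still needs the direct computation (or a continuity argument) that you in fact supply; and the finiteness you invoke is cleanest seen not from Zagier's rational-argument result but from the elementary fact that $a<0<Q_z$ forces $\vt{a} < \sqrt{DD_0}/(2\im(z))$ with only finitely many admissible $b$ for each $a$ --- though, since your reindexing is a termwise bijection matching summands, the rearrangement is harmless in any case.
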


\begin{rmk}
Although the values at $z$ and $z+1$ agree, the set of quadratic forms contributing to the sum defining $P_{k,N,D,D_0}$ might change (if $z$ and $z+1$ are not in the same connected component).
\end{rmk}

\begin{proof}
This follows by Proposition \ref{prop:Fcsplitting} along with the Fourier expansions of both Eichler integrals and modularity of $\mathcal{F}_{1-k,N,D,D_0}$. An alternative computational proof is provided in \cite{mo2}*{Lemma 2.1 (i)} recalling that $\chi_{D_{0}}$ is $\Gamma_{0}(N)$-invariant.
\end{proof}

We specialize the local polynomial $P_{k,N,D,D_0}$ from \eqref{locPolydefn} to connected components $\mathcal{C}_{\mathfrak{a}}$ now. 

\begin{prop} \label{prop:locPolylimit}
Let $x \in \Q$, $\mathscr{P}_{k,N,D,D_0}(x)$ be as in \eqref{eq:Pmaindef}, and $DD_0>0$ be a non-square discriminant. Then, we have
\begin{align*}
\lim_{z \to x} \Big(\sum_{\substack{Q=[a,b,c] \in \mathcal{Q}_{N,DD_0} \\ a < 0 < Q_z}} \chi_{D_{0}}(Q) Q(z,1)^{k-1} \Big)
 = \sum_{\substack{Q=[a,b,c] \in \mathcal{Q}_{N,DD_0} \\ a < 0 < Q(x,1)}} \chi_{D_{0}}(Q) Q(x,1)^{k-1},
\end{align*}
where the limit is understood within a connected component containing $x$ on its boundary. In other words, we have
\begin{align*}
\lim_{z \to x} P_{k,N,D,D_0}(z) = c_\infty(N, DD_0) + (-1)^{k-1} \binom{2k-2}{k-1} \pi 2^{2-2k} \mathscr{P}_{k,N,D,D_0}(x).
\end{align*}
\end{prop}

\begin{proof}
Since $z \in \H$, the summation condition $[a,b,c]_z > 0$ inside $P_{k,N,D,D_0}$ is equivalent to $\im(z)Q_{z} = a\vt{z}^2+b\re(z)+c>0$. For any $x \in \Q$, we observe that 
\begin{align*}
\lim_{z \to x} (\im(z)Q_{z}) = \lim_{z \to x} \left(a\vt{z}^2+b\re(z)+c\right) = Q(x,1) \neq 0,
\end{align*}
as the zeros of $Q(z,1)$ are real quadratic irrationals for non-square discriminants.
\end{proof}

Choosing $x=0$ in the proof of Proposition \ref{prop:locPolylimit} shows that \cite{bkk}*{Corollary 7.2} generalizes verbatim to higher levels.
\begin{cor} \label{cor:BKKCor7.2}
For every $z\in \mathcal{C}_0$, we have
\begin{align*}
P_{k,N,D,D_0}(z) = c_\infty(N, DD_0) + (-1)^{k-1} \binom{2k-2}{k-1} \pi 2^{2-2k} \sum_{\substack{Q=[a,b,c] \in \mathcal{Q}_{N,DD_0} \\ a < 0 < c}} \chi_{D_{0}}(Q) Q(z,1)^{k-1}.
\end{align*}
\end{cor}

Lastly, we show that $\Fc_{1-k,N,D,D0}$ is modular for the Fricke involution.
\begin{prop} \label{prop:FcalFricke}
We have
\begin{align*}
\Fc_{1-k,N,D,D_0}\Big\vert_{2-2k}(W_N-\id) = 0.
\end{align*}
\end{prop}

\begin{proof}
Let $Q=[a,b,c] \in \Qc_{DD_0}$. The actions of $W_N$ on $Q$ and $z$ are compatible, that is
\begin{align*}
\left([a,b,c] \circ W_N\right)(z,1) &= \frac{a}{N}-bz+cNz^2 
= \left(\sqrt{N}z\right)^2[a,b,c]\left(-\frac{1}{Nz},1\right).
\end{align*}
Moreover,
\begin{multline*}
[a,b,c]_{W_Nz} = \frac{1}{\im\left(-\frac{1}{Nz}\right)}\left(a\left\vert -\frac{1}{Nz}\right\vert^2+b\re\left(-\frac{1}{Nz}\right)+c\right) \\
= \frac{N\vt{z}^2}{\im(z)}\left(\frac{a}{N^2}\frac{1}{\vt{z}^2}-\frac{b}{N\vt{z}^2}\re(z)+c\right) 
= \left[cN,-b,\frac{a}{N},\right]_{z} = \left([a,b,c] \circ W_N\right)_{z},
\end{multline*}
and 
\begin{align*}
\frac{DD_0\im\left(-\frac{1}{Nz}\right)}{\left\vert Q\left(-\frac{1}{Nz},1\right) \right\vert^2} = \frac{\frac{1}{N\vt{z}^2}}{\vt{\left(\sqrt{N}z\right)^{-2}}}\frac{DD_0\im\left(z\right)}{\left\vert \left(Q \circ W_N\right)(z,1) \right\vert^2} = \frac{DD_0\im\left(z\right)}{\left\vert \left(Q \circ W_N\right)(z,1) \right\vert^2}.
\end{align*}
The discriminant of $[a,b,c] \circ W_N = \big[cN,-b,\frac{a}{N},\big]$ equals the discriminant of $[a,b,c]$, and the genus character $\chi_{D_{0}}$ is invariant under $W_N$ as well (see \cite{grokoza}*{Proposition 1}). Hence, the claim follows by substituting $Q \circ W_N \mapsto Q$ inside the series defining $\Fc_{1-k,N,D,D0}$ (recall \eqref{eq:genusrules}).
\end{proof}

\subsection{Period-like polynomials}
The theory of period polynomials for $\Gamma_{0}(N)$ is treated in \cites{diamantis, papo, antoniadis}. While we do not require the full theory for $\Gamma_(N)$ (i.e. vector-valued period polynomials with $[\slz \colon \Gamma_{0}(N)]$ many components), we introduce some key objects that we need in the following. By slight abuse of terminology, we call our polynomials ``period-like'', because they specialize to the classical (even) period polynomials in level $N=1$.

For $f \in S_{2k}(N)$ we define
\begin{equation*}
r_{n}(f) \coloneqq \int_{0}^{\infty} f(it) t^n \dm t = \frac{n!}{(2 \pi)^{n+1}} L(f,n+1), \qquad 0 \leq n \leq 2k-2,
\end{equation*}
where $L(f,n+1)$ denote the special $L$-values as defined in Section \ref{Sec: L-functions prelims}. The numbers $r_{n}(f)$ naturally appear as coefficients of the period-like polynomial
\begin{equation*}
r_{f}(z) \coloneqq \int_{0}^{i \infty} f(w) (z-w)^{2k-2} \dm w =\sum_{n=0}^{2k-2} i^{1-n} \binom{2k-2}{n} r_{n}(f) z^{2k-2-n}
\end{equation*}
of $f$. The even period-like polynomial of $f$ is given by
\begin{equation*}
r_{f}^{+}(z) \coloneqq \sum_{\substack{n=0 \\ n \text{ even}}}^{2k-2} i^{1-n} \binom{2k-2}{n} r_{n}(f) z^{2k-2-n}.
\end{equation*}
In addition, we let $\gamma \in \Gamma_{0}(N)\cup\{W_N\}$ and define the polynomials
\begin{align} \label{eq:obstructionpolys}
R_{\gamma}(z) &\coloneqq \Ec_{f_{k,N,D,D_0}}\Big\vert_{2-2k}\left(\gamma-\id\right)(z), \qquad r_{\gamma}(z) \coloneqq f_{k,N,D,D_0}^*\Big\vert_{2-2k}\left(\gamma-\id\right)(z)
\end{align}
of degree at most $2k-2$, where the Eichler integrals were defined in \eqref{eq:Eichlerholo} and \eqref{eq:Eichlernonholo}.
A standard calculation (see \cite{CKL}*{Proposition 3.2} for example) shows that 
\begin{align} \label{eq:periodpolyEichlerholo}
\Ec_{f}\Big\vert_{2-2k}\left(\gamma-\id\right)(z) = -\frac{(2\pi i)^{2k-1}}{(2k-2)!} \int_{\gamma^{-1}i\infty}^{i \infty} f(w) (z-w)^{2k-2} \dm w
\end{align}
for every $\gamma \in \Gamma_{0}(N)$. If $\gamma = W_N$ then $W_N^{-1}i\infty = 0$ and we recover the period-like polynomial of $f$ up to sign of $f$ under $W_N$.

We adapt \cite{koza84}*{Theorem 4} to higher levels.
\begin{prop} \label{prop:evenperiodpoly}
The even period-like polynomial of $f_{k,N,D,D_0}$ is given by
\begin{multline*}
\frac{(DD_0)^{k-\frac{1}{2}}}{\binom{2k-2}{k-1}\pi}r_{f_{k,N,D,D_0}}^{+}(x) \\
= -2\sum_{\substack{Q=[a,b,c] \in \mathcal{Q}_{N,DD_0} \\ a<0<c}} \chi_{D_{0}}(Q) Q(x,1)^{k-1} 
- \frac{(-1)^k 2^{2k-2}}{\binom{2k-2}{k-1}\pi}c_{\infty}(N, DD_0)\left(N^{k-1}x^{2k-2}-1\right).
\end{multline*}
\end{prop}

\begin{proof}
We adapt the proof of \cite{bkk}*{Theorem 1.4}, but we replace the modular inversion $S$ by $W_N$ and determine the multiple of $N^{k-1}x^{2k-2}-1$. By Propositions \ref{prop:Fcsplitting} and \ref{prop:FcalFricke}, we have
\begin{multline} \label{eq:Fcmodularsplitting}
0 = \Fc_{1-k,N,D,D0} \big\vert_{2-2k} (W_N - \id) (z) \\
= P_{k,N,D,D_0} \big\vert_{2-2k} (W_N - \id) (z) - (DD_0)^{k-\frac{1}{2}}\frac{(2k-2)!}{(4\pi)^{2k-1}}R_{W_N}(z) + (DD_0)^{k-\frac{1}{2}}r_{W_N}(z),
\end{multline}
where $R_{W_N}$ and $r_{W_N}$ are as in \eqref{eq:obstructionpolys}. Furthermore, the polynomials $R_{W_N}$ and $r_{W_N}$ are connected to each other via mock modular Eichler--Shimura theory. This goes back to Knopp \cite{knopp62}*{(3.22)}. An extension to our non-holomorphic framework and more general groups has been worked out in \cite{CKL}. Using that $P_{k,N,D,D_0}$ is annihilated by both $\xi_{2-2k}$ and $\mathbb{D}^{2k-1}$, \cite{CKL}*{Theorem 1.3} asserts that\footnote{Note that their constant $c_f$ equals $0$ in our case, because $f = f_{k,N,D,D_0}$ is cuspidal. }
\begin{align*}
r_{W_N}(z) = \frac{(2k-2)!}{(-4\pi)^{2k-1}}R_{W_N}^{c}(\overline{z})
\end{align*}
where the superscript denotes complex conjugated coefficients. Hence, \eqref{eq:Fcmodularsplitting} becomes
\begin{align*}
0 =  P_{k,N,D,D_0}\Big\vert_{2-2k}\left(W_N-\id\right)(z) - (DD_0)^{k-\frac12} \frac{(2k-2)!}{(4\pi)^{2k-1}} \left(R_{W_N}(z) + R_{W_N}^{c}(\overline{z})\right).
\end{align*}
Moreover, \eqref{eq:periodpolyEichlerholo} with $f = f_{k,N,D,D_0}$ yields
\begin{align*}
R_{W_N}(z) = \Ec_{f_{k,N,D,D_0}}\Big\vert_{2-2k}\left(W_N-\id\right)(z) = -\frac{(2\pi i)^{2k-1}}{(2k-2)!}r_{f_{k,N,D,D_0}}(z)
\end{align*}
noting that the sign of the eigenvalue of $f_{k,N,D,D_0}$ under $W_N$ is $+1$ (proof as in Proposition \ref{prop:FcalFricke}). We obtain
\begin{align} \label{eq:evenperiodpolytemp}
P_{k,N,D,D_0}\Big\vert_{2-2k}\left(W_N-\id\right)(z) = -i^{2k-1}(DD_0)^{k-\frac12} 2^{1-2k} \left(r_{f_{k,N,D,D_0}}(z) + r_{f_{k,N,D,D_0}}^c(\overline{z})\right)
\end{align}
Note that $f_{k,N,D,D_0}(it)$ is real, because mapping $b \mapsto -b$ yields (recall \eqref{eq:genusrules})
\begin{align} \label{eq:fkDreal}
f_{k,N,D,D_0}(it) 
= \sum_{[a,b,c] \in \Qc_{N,DD_0}} \frac{\chi_{D_0}[a,b,c]}{(-at^2-ibt+c)^{k}} = \overline{f_{k,N,D,D_0}(it)},
\end{align}
Thus, the coefficients $r_n(f_{k,N,D,D_0})$ are real too. This shows that
\begin{align} \label{eq:periodpolycombining}
r_{f_{k,N,D,D_0}}^c(x) + r_{f_{k,N,D,D_0}}(x) = 2ir_{f_{k,N,D,D_0}}^+(x).
\end{align}
By Corollary \ref{cor:BKKCor7.2}, we have
\begin{multline*}
P_{k,N,D,D_0}\Big\vert_{2-2k}\left(W_N-\id\right)(z) = c_\infty(N, DD_0)\left(N^{k-1}z^{2k-2}-1\right) \\
+ (-1)^{k-1} \binom{2k-2}{k-1} \pi 2^{2-2k} \sum_{\substack{Q=[a,b,c] \in \mathcal{Q}_{N,DD_0} \\ a < 0 < c}} \chi_{D_{0}}(Q) Q(z,1)^{k-1} \Big\vert_{2-2k}\left(W_N-\id\right).
\end{multline*}
Using $[a,b,c] \circ W_N = \big[cN,-b,\frac{a}{N}\big]$ and mapping $Q \mapsto -Q$ (giving a factor $(-1)^{k-1}$ and a factor $\mathrm{sgn}(D_0) = (-1)^k$ by \eqref{eq:genusrules}) establishes that
\begin{multline*}
P_{k,N,D,D_0}\Big\vert_{2-2k}\left(W_N-\id\right)(z) = c_\infty(N, DD_0)\left(N^{k-1}z^{2k-2}-1\right) \\
+ (-1)^k\binom{2k-2}{k-1} \pi 2^{3-2k} \sum_{\substack{Q=[a,b,c] \in \mathcal{Q}_{N,DD_0} \\ a < 0 < c}} \chi_{D_{0}}(Q) Q(z,1)^{k-1}.
\end{multline*}
Letting $z \to x \in \R$ within $\mathcal{C}_0$ in \eqref{eq:evenperiodpolytemp} and combining yields the claim.
\end{proof}

\section{Two proofs of Theorem \ref{thm:zagiersplitting}} \label{sec:proofzagiersplitting}

\subsection{First proof of Theorem \ref{thm:zagiersplitting}}
The idea of this proof is sketched briefly in level $N=1$ by Zagier \cite{zagier1}*{(p.\ 1175)}. 
\begin{proof}[First proof of Theorem \ref{thm:zagiersplitting}]
We begin by showing that 
\begin{align*}
\Ps_{k,N,D,D_0}(x+1) = \Ps_{k,N,D,D_0}(x), \qquad \Ps_{k,N,D,D_0}(-x) = \Ps_{k,N,D,D_0}(x)
\end{align*}
for every $x \in \Q$. To this end, we use Proposition \ref{prop:locPolylimit} to write $\Ps_{k,N,D,D_0}(x)$ as a limit of $P_{k,N,D,D_0}(z)$. Then, the first assertion follows by Lemma \ref{lem:locPolytranslation} directly. For the same reason, we may assume that $z \in \mathcal{C}_0$. Then, the second assertion follows by observing that the sum in Corollary \ref{cor:BKKCor7.2} is invariant under the involution $b \mapsto -b$ on $\Qc_{N, DD_0}$ (which captures $x \mapsto -x)$.

Hence, we obtain the Fourier series representation
\begin{align*}
\Ps_{k,N,D,D_0}(x) = c_0 + 2\re\left(\sum_{n \geq 1} c_n e^{2\pi in x}\right), \qquad c_n = \int_{0}^{1} \Ps_{k,N,D,D_0}(x) e^{-2\pi in x} \dm x.
\end{align*}
We compute the Fourier coefficients $c_n$ for every $n \in \N_0$. If $n \geq 1$, we relate them to the Fourier coefficients of $f_{k,N,D, D_0}$ (which turn out to be real too). Following \cite{zagier1}*{Section $8$}, we rewrite
\begin{multline*}
\int_{0}^{1} \Ps_{k,N,D,D_0}(x) e^{-2\pi in x} \dm x \\
= \sum_{\substack{Q=[a,b,c] \in \Qc_{N,DD_0} \slash \Gamma_{\infty} \\ a < 0}} \chi_{D_{0}}(Q) \int_{-\infty}^{\infty} \left(\max\left(0,Q(x,1)\right)\right)^{k-1} e^{-2\pi in x} \dm x.
\end{multline*}
Substituting $x = \frac{-b+t\sqrt{DD_0}}{2a}$ yields
\begin{align*}
Q(x,1) = \frac{DD_0}{4a}(1-t^2),
\end{align*}
and hence
\begin{align*}
\int_{-\infty}^{\infty} \left(\max\left(0,Q(x,1)\right)\right)^{k-1} e^{-2\pi in x} \dm x = \frac{(DD_0)^{k-\frac{1}{2}}}{2^{2k-1}\vt{a}^k} e^{2\pi i n \frac{b}{2a}} \int_{-1}^{1} \left(1-t^2\right)^{k-1} e^{-2\pi i n \frac{t \sqrt{DD_0}}{2a}} \dm t.
\end{align*}
Identifying the cosets (see \cite{ilt22}*{Lemma 3.2}) gives
\begin{multline*}
\sum_{\substack{Q=[a,b,c] \in \Qc_{N,DD_0} \slash \Gamma_{\infty} \\ a < 0}} \chi_{D_0} \left(\left[a,b,\frac{b^2-DD_0}{4a}\right]\right) e^{2\pi i n \frac{b}{2a}} \\
= 
 \sum_{\substack{a \geq 1 \\ N \mid a}}  \sum_{\substack{0 \leq b \leq 2a-1 \\ b^2 \equiv DD_0 \pmod{4a}}} \chi_{D_0} \left(\left[-a,b,-\frac{b^2-DD_0}{4a}\right]\right) e^{2\pi i n \frac{b}{-2a}} \\
= (-1)^k \sum_{\substack{a \geq 1 \\ N \mid a}}  \sum_{\substack{0 \leq b \leq 2a-1 \\ b^2 \equiv DD_0 \pmod{4a}}} \chi_{D_0} \left(\left[a,b,\frac{b^2-DD_0}{4a}\right]\right) e^{2\pi i n \frac{b}{2a}}.
\end{multline*}
If $n = 0$ then \cite{table}*{8.380.1, 8.384.1} yields
\begin{align*}
\int_{-1}^{1} \left(1-t^2\right)^{k-1} \dm t = \frac{\Gamma(k)\sqrt{\pi}}{\Gamma\left(k+\frac{1}{2}\right)} = \frac{2^{2k-2}}{\binom{2k-2}{k-1}\left(k-\frac{1}{2}\right)},
\end{align*}
and we infer
\begin{align} \label{eq:polyaverage}
c_0 = \int_{0}^{1} \Ps_{k,N,D,D_0}(x) \dm x = \frac{(-1)^k 2^{2k-2}}{\pi \binom{2k-2}{k-1}} c_{\infty}(N,DD_0).
\end{align}
If $n \geq 1$, we recall that we are interested in $\re(c_n)$. Then, \cite{table}*{8.411.8} gives 
\begin{align*}
\int_{-1}^{1} \left(1-t^2\right)^{k-1} \cos\left(\frac{2\pi n \sqrt{DD_0}}{2a} t\right) \dm t 
&= \frac{\Gamma(k)\sqrt{\pi}}{\left(\frac{\pi n \sqrt{DD_0}}{2a}\right)^{k-\frac{1}{2}}} J_{k-\frac{1}{2}}\left(\frac{\pi n \sqrt{DD_0}}{a}\right),
\end{align*}
where $J_{\nu}(x)$ denotes the usual $J$-Bessel function. Thus, we obtain 
\begin{align*}
c_n = (-1)^k\sqrt{DD_0}^{k-\frac{1}{2}} \frac{\Gamma(k)}{2^{k-\frac{1}{2}}\pi^{k-1} n^{k-\frac{1}{2}}} \sum_{\substack{a \geq 1 \\ N \mid a}}  \sum_{\substack{0 \leq b \leq 2a-1 \\ b^2 \equiv DD_0 \pmod{4a}}} \frac{\chi_{D_{0}}(Q)}{a^{\frac{1}{2}}}  e^{2\pi i n \frac{b}{2a}}
J_{k-\frac{1}{2}}\left(\frac{\pi n \sqrt{DD_0}}{a}\right).
\end{align*}
Kohnen \cite{Kohnen}*{Proposition 2} computed the Fourier expansion
\begin{align*}
f_{k,N,D,D_0}(z) = \sum_{n \geq 1} c_{f_{k,N,D,D_0}}(n)q^n,
\end{align*}
for square-free $N$. Since $DD_0$ is not a perfect square, we have
\begin{align} \label{eq:fkDFourier}
c_{f_{k,N,D,D_0}}(n) = \frac{(-1)^k\sqrt{2}(2\pi)^{k+1}}{\Gamma(k)\sqrt{DD_0}^{k-\frac{1}{2}}}n^{k-\frac{1}{2}} \sum_{\substack{a \geq 1 \\ N \mid a}}  \sum_{\substack{0 \leq b \leq 2a-1 \\ b^2 \equiv DD_0 \pmod{4a}}} \frac{\chi_{D_{0}}(Q)}{a^{\frac{1}{2}}}  e^{2\pi i n \frac{b}{2a}}
J_{k-\frac{1}{2}}\left(\frac{\pi n \sqrt{DD_0}}{a}\right).
\end{align}
In particular, both $c_n$, $c_{f_{k,N,D,D_0}}(n) \in \R$ for every $n \geq 1$. Combining, we deduce
\begin{align*}
c_n = \frac{(DD_0)^{k-\frac{1}{2}}\Gamma(k)^2}{2^{2k+1}\pi^{2k}} \frac{1}{n^{2k-1}} c_{f_{k,N,D,D_0}}(n),
\end{align*}
and we arrive at
\begin{align*}
\Ps_{k,N,D,D_0}(x) = \frac{(-1)^k 2^{2k-2}}{\pi \binom{2k-2}{k-1}} c_{\infty}(N,DD_0) 
+ \frac{(DD_0)^{k-\frac{1}{2}}\Gamma(k)^2}{(2\pi)^{2k}}\sum_{n \geq 1} \frac{c_{f_{k,N,D,D_0}}(n)}{n^{2k-1}}  \cos(2\pi n x).
\end{align*}
This is the claim.
\end{proof}

\subsection{Second proof of Theorem \ref{thm:zagiersplitting}}

The idea of this proof is sketched in level $N=1$ by Zagier \cite{zagier1}*{(p.\ 1174)}. 
\begin{proof}[Second proof of Theorem \ref{thm:zagiersplitting}]
Generalizing \cite{zagier1}*{eq.\ (26)}, we have
\begin{multline} \label{eq:Zagier(26)}
\Bigg(\sum_{\substack{Q=[a,b,c] \in \mathcal{Q}_{N,DD_0} \\ a < 0 < Q(x,1)}} \chi_{D_{0}}(Q) Q(x,1)^{k-1}\Bigg) \Bigg\vert_{2-2k} \left(W_N-\id\right)(x) \\
= (-1)^k\sum_{\substack{Q=[a,b,c] \in \mathcal{Q}_{N,DD_0} \\ a < 0 < c}} \chi_{D_{0}}(Q) Q(x,1)^{k-1}.
\end{multline}
The details can be adapted from the proof of \cite{zagier1}*{(18)} directly. As in \eqref{eq:fkDreal}, we observe that
\begin{align*}
\overline{f_{k,N,D,D_0}(-\overline{z})} = f_{k,N,D,D_0}(z),
\end{align*}
and hence $f_{k,N,D,D_0}$ has real Fourier coefficients (compare with \eqref{eq:fkDFourier} too). Combining Proposition \ref{prop:evenperiodpoly} with \eqref{eq:Zagier(26)} shows that
\begin{align*}
\Phi_{k,N,D,D_0}(x) = (-1)^k\re\left(\Ec_{f_{k,N,D,D_0}}(x)\right).
\end{align*}
Following \cite{zagier1}*{(55)}, we obtain
\begin{align*}
\Ps_{k,N,D,D_0}(x) = \sum_{\substack{Q=[a,b,c] \in \Qc_{N, DD_0} \\ a < 0 < Q(x,1)}} \chi_{D_{0}}(Q) Q(x,1)^{k-1} = \alpha_{k,N,DD_0} + (-1)^k\beta_{k,N,DD_0}\Phi_{k,N,D,D_0}(x)
\end{align*}
for some constants $\alpha_{k,N,DD_0}$, $\beta_{k,N,DD_0}$. Since $f_{k,N,D,D_0}$ is a cusp form, the constant Fourier coefficient of $\Ec_{f_{k,N,D,D_0}}$ vanishes. In other words, we have
\begin{align*}
\int_{0}^{1}\Phi_{k,N,D,D_0}(x) \dm x = 0,
\end{align*}
and combining with \eqref{eq:polyaverage} shows that (see \cite{zagier1}*{Section 8} too)
\begin{align*}
\gamma_{k,N,DD_0} = \int_{0}^{1} \Ps_{k,N,D,D_0}(x) \dm x = \alpha_{k,N,DD_0}.
\end{align*}
The constant $\beta_{k,N,DD_0}$ can be computed from \eqref{eq:periodpolyEichlerholo}, \eqref{eq:periodpolycombining}, and Proposition \ref{prop:evenperiodpoly}, namely
\begin{align*}
(-1)^k\beta_{k,N,DD_0} = (-1)^k\frac{(2k-2)!}{2i (2\pi i)^{2k-1}} \frac{(DD_0)^{k-\frac{1}{2}}}{\binom{2k-2}{k-1}\pi} = \frac{(DD_0)^{k-\frac{1}{2}}\Gamma(k)^2}{(2\pi)^{2k}},
\end{align*}
in accordance with the first proof.
\end{proof}

\section{Proof of Theorem \ref{Thm: intro} and Corollary \ref{cor:maintheoremexplicit}}\label{Sec: proof of main theorem}

Let 
$
\mathfrak{n} \coloneqq \mathrm{dim}_{\C}(S_{2k}(N)) \geq 1.
$

\subsection{Proof of Theorem \ref{Thm: intro} if \texorpdfstring{$\mathfrak{n}=1$}{n=1}}

Our proof of the one-dimensional case differs from \cite{egkr} due to our setting of higher weights and square-free levels.
\begin{proof}[Proof of Theorem \ref{Thm: intro} if $\mathfrak{n}=1$]
Let $f \in S_{2k}^{new}(N) \setminus \{0\}$ be as in the theorem. Then, 
\begin{align*}
\Ec_f \big\vert_{2-2k} (\gamma - \id) (z), \qquad f^{*}\big\vert_{2-2k} (\gamma - \id) (z)
\end{align*}
are polynomials of degree at most $2k-2$ for every $\gamma \in \Gamma_0(N)$. By the Fourier expansions of $\mathcal{E}_{f}$ and $f^*$ from \eqref{eq:Eichlerholo} and \eqref{eq:Eichlernonholo}, we have 
\begin{align*}
\mathcal{E}_{f}=0 \iff f = 0 \iff f^* = 0.
\end{align*}
Hence, both Eichler integrals of $f$ do not vanish identically. Since $k > 1$, both Eichler integrals are not constant and thus are linearly independent (one is holomorphic and one is anti-holomorphic). Since $k > 1$ and the Eichler integrals are not trivial, both polynomials are never modular. In other words, there exists some $\widehat{\gamma} \in \Gamma_0(N)$ such that
\begin{align*}
\left(- (DD_0)^{k-\frac{1}{2}}\frac{(2k-2)!}{(4\pi)^{2k-1}}\Ec_f + (DD_0)^{k-\frac{1}{2}}f^*\right)\big\vert_{2-2k} (\widehat{\gamma} - \id) (z) \neq 0.
\end{align*}
Here, the constants are chosen as in Proposition \ref{prop:Fcsplitting}.

Since $\Fc_{1-k,N,D,D0}$ is modular and admits the splitting from Proposition \ref{prop:Fcsplitting}, we have
\begin{multline*}
0 = \Fc_{1-k,N,D,D0} \big\vert_{2-2k} (\widehat{\gamma} - \id) (z) = P_{k,N,D,D_0} \big\vert_{2-2k} (\widehat{\gamma} - \id) (z) \\
+ \left(- (DD_0)^{k-\frac{1}{2}}\frac{(2k-2)!}{(4\pi)^{2k-1}}\Ec_{f_{k,N,D,D_0}} + (DD_0)^{k-\frac{1}{2}}f_{k,N,D,D_0}^*\right)\big\vert_{2-2k} (\widehat{\gamma} - \id) (z).
\end{multline*}
By assumption, we have $f_{k,N,D,D_0} = \delta_{k,N,D,D_0} f$ for some $\delta_{k,N,D,D_0} \in \C$. Hence, we obtain
\begin{multline*}
P_{k,N,D,D_0} \big\vert_{2-2k} (\widehat{\gamma} - \id) (z) \\
= \delta_{k,N,D,D_0} \left((DD_0)^{k-\frac{1}{2}}\frac{(2k-2)!}{(4\pi)^{2k-1}}\Ec_{f} - (DD_0)^{k-\frac{1}{2}}f^*\right)\big\vert_{2-2k} (\widehat{\gamma} - \id) (z).
\end{multline*}
We deduce that $\delta_{k,N,D,D_0} = 0$ (and hence $f_{k,N,D,D_0} = 0$) if and only if
\begin{align*}
P_{k,N,D,D_0} \big\vert_{2-2k} (\widehat{\gamma} - \id) (z) = 0
\end{align*}
for every $z \in \H \setminus E_{N,DD_0}$. 

Since $N$ is square-free, we may use Kohnen's results given in Proposition \ref{Prop: kohnen} and Proposition \ref{Cor: kohnen}. Hence, we have that 
\begin{align*}
L(f\otimes\chi_D,k)L(f\otimes\chi_{D_{0}},k) = 0
\end{align*}
if and only if $\langle f, f_{k,N,D,D_0} \rangle = 0$. This is in turn equivalent to $f_{k,N,D,D_0} = 0$ by assumption and the fact that the Petersson inner-product is non-degenerate. Hence, we obtain
\begin{align*}
L(f\otimes\chi_D,k)L(f\otimes\chi_{D_0},k) = 0
\end{align*}
if and only if $f_{k,N,D,D_0} = 0$. In turn, we have $f_{k,N,D,D_0} = 0$ if and only if $\Ec_{f_{k,N,D,D_0}} = 0$ as asserted at the beginning of the proof. By the Fourier expansion of $\mathscr{P}_{k,N,D,D_0}$ in Theorem \ref{thm:zagiersplitting}, we have $\Ec_{f_{k,N,D,D_0}} = 0$ if and only if $\Phi_{k,N,D,D_0} = 0$. In other words, we have $f_{k,N,D,D_0} = 0$ if and only if 
\begin{align*}
\mathscr{P}_{k,N,D,D_0}(x) = \gamma_{k,N,DD_0} = \frac{(-1)^{k} 2^{2k-2}}{\binom{2k-2}{k-1} \pi} c_\infty(N, DD_0)
\end{align*}
for every $x \in \Q$. We deduce Theorem \ref{Thm: intro} in the one-dimensional case. 
\end{proof}

\subsection{Hecke-like operators}\label{Sec: HL ops}
Here we produce Hecke-like operators that will later enable us to isolate a given newform (that is, an eigenform in the newspace) in order for us to return to one-dimensionality arguments in future sections (see also \cite{kongthesis}).

Consider $f \in S_{2k}(N)$. Assume that $\mathfrak{n} \geq 2$ and that $\{ f_1,\dots, f_{\mathfrak{n}} \}$ is a basis of normalized Hecke eigenforms for the space in question. For a prime $p$ we let $a_{p,j}$ be defined by $ T_p \vert_{2k} f_j = a_{p,j} f_j$. Write $f = \sum_{\nu=1}^{\mathfrak{n}} c_\nu f_\nu$. 

Assume that we aim to isolate the eigenform $f_1$. Since $f_1$ and $f_2$ are not equal, there exists a prime $p_2$ such that $a_{p_2,1} \neq a_{p_2,2}$ (that is, there exists a Hecke operator that distinguishes the two eigenforms). Thus we obtain that
\begin{align*}
 (T_{p_2} - a_{p_2,2}) \vert_{2k} (c_1f_1) = ( a_{p_2,1} - a_{p_2,2}) c_1 f_1 \neq 0 \qquad  (T_{p_2} - a_{p_2,2})c_2f_2 = 0.
\end{align*}
Thus we have that
\begin{align*}
 (T_{p_2} - a_{p_2,2})\vert_{2k} f = ( a_{p_2,1} - a_{p_2,2}) c_1 f_1 + \sum_{\nu = 3}^{\mathfrak{n}} ( a_{p_2,\nu} - a_{p_2,2}) c_\nu f_\nu.
\end{align*}
It is clear that one may iterate this process a finite number of times (annihilating each $f_\nu$ in turn) to produce an operator 
\begin{align*}
\T_1 \coloneqq (T_{p_2} -a_{p_2,2})(T_{p_3,3}-a_{p_3,3}) \cdots (T_{p_{\mathfrak{n}}} -a_{p_{\mathfrak{n}},\mathfrak{n}})
\end{align*}
such that
\begin{align*}
 \T_1 \vert_{2k} f  = c_1 (a_{p_2,1} - a_{p_2,2}) (a_{p_3,1} - a_{p_3,3}) \cdots (a_{p_{\mathfrak{n}},1} - a_{p_{\mathfrak{n}},\mathfrak{n}}) f_1 \neq 0.
\end{align*}

Using the same construction, it is clear that one may isolate any given eigenform $f_\nu$ in the basis by constructing the operator $\T_\nu$ in the same fashion.

Thus for us it is clearly enough to study forms $f \in S_{2k}^{new}(N)$. Moreover, for such an $f$ it is clear that one may construct a Hecke-like operator which maps $f$ to the space generated by a given newform (i.e.\ a given eigenspace in $S_{2k}^{new}(N)$). One may then clearly rescale the resulting newform so that $a(1) =1$ in order to apply Proposition \ref{Cor: kohnen}.

For fixed $k,N$ we let 
\begin{align*}
\T^{new} \colon S_{2k}(N) \to S_{2k}^{new}(N)
\end{align*}
be the Hecke-like operator that projects $S_{2k}(N)$ onto the subspace of newforms, and 
\begin{align*}
\T_\nu \colon S_{2k}^{new}(N) \to \mathrm{span}_{\C}\{f_{\nu}\}
\end{align*}
be the Hecke-like operator that projects $S_{2k}^{new}(N)$ to the eigenspace generated by the newform $f_\nu$. By the strong multiplicity one theorem (see \cite{IK}*{Theorem 14.18} for example), the space generated by $f_\nu$ is one-dimensional. The fact that $\T^{new}$ is constructed via Hecke operators ensures ``compatibility'' with the Petersson inner products.

\subsection{Proof of Theorem \ref{Thm: intro} if \texorpdfstring{$\mathfrak{n}>1$}{n>1} and of Corollary \ref{cor:maintheoremexplicit}}

Now, we prove our main result in higher dimensions as well as Corollary \ref{cor:maintheoremexplicit}.
\begin{proof}[Proof of Theorem \ref{Thm: intro} if $n>1$ and of Corollary \ref{cor:maintheoremexplicit}]
Since $f$ is assumed to be a normalized newform we have that $f = f_\nu$ for some basis element $f_\nu$. We construct a Hecke-like operator $\T = \T_\nu \circ \T^{new}$ as defined in Subsection \ref{Sec: HL ops}, where 
\begin{align*}
\T^{new} \colon S_{2k}(N) \rightarrow S_{2k}^{new}(N), \qquad T_\nu \colon S_{2k}^{new}(N) \rightarrow \mathrm{span}_\C\{f_\nu\}.
\end{align*}
This construction is not unique. We recall that the Hecke operators are self-adjoint (because the characters are real). Since $f = f_\nu$ we see that
\begin{align*}
	\langle f, f_{k,N,D,D_0} \rangle = C \langle f, \T f_{k,N,D,D_0} \rangle
\end{align*}
for some constant $C \in \C$. Moreover, the right-hand side vanishes if and only if $\T f_{k,N,D,D_0}$ vanishes since the Petersson inner product is non-degenerate and $\T f_{k,N,D,D_0} \in \mathrm{span}_\C\{f\}$.

The action of a Hecke-like operator $T_p - a_p$ with $p \nmid N$ on $f_{k,N,D,D_0}$ lifts to the action of $T_p - p^{1-2k}a_p$ on the locally harmonic Maass form $\Fc_{1-k,N,D,D_0}$ by combining Propositions \ref{prop:Fcsplitting} and \ref{prop:HeckeOnfkND} with \eqref{eq:surjectivity}. In other words, we have
\begin{align*}
T_p \Big\vert_{2k} \left(\xi_{2-2k} \Fc_{1-k,N,D,D0}\right) &= \xi_{2-2k} \left(T_p \Big\vert_{2-2k} \Fc_{1-k,N,D,D0}  \right), \\
T_p \Big\vert_{2k} \left(\mathbb{D}^{2k-1} \Fc_{1-k,N,D,D0}\right) &= \mathbb{D}^{2k-1} \left(T_p \Big\vert_{2-2k} \Fc_{1-k,N,D,D0}\right).
\end{align*}
Thus we see that the operator $\T$ acting on $f_{k,N,D,D_0}$ also lifts to a Hecke-like operator $\widetilde{\T}$ acting on $\Fc_{1-k,N,D,D_0}$.

The proof of the general case of Theorem \ref{Thm: intro} now follows in the same way as in the one-dimensional case. The operator $\widetilde{\T}$ now acts on $\mathscr{P}_{k,N,D,D_0}$, precisely giving the Hecke-like operator in the theorem statement as well as in Corollary \ref{cor:maintheoremexplicit}.
\end{proof}

\section{Proof of Theorem \ref{thm:cinftyexplicit}} \label{sec:proofofcinftyexplicit}

In this section we compute an explicit formula for the constant $c_\infty(N,DD_0)$ in terms of generalized Hurwitz class numbers. We begin with a preparatory technical lemma.
\begin{lemma} \label{lem:levelreduction}
	Let $N$ be square-free and $a \geq 1$. Then, it holds that
	\begin{align*}
		\sum_{\substack{\ell \mid N \\ \gcd(\ell,a)=1}} \mu(\ell) = \sum_{\ell \mid \frac{N}{\gcd(N,a)}} \mu(\ell) = 
\begin{cases}
			1 & \text{if } N = \gcd(N,a),  \text{i.e. } N \mid a, \\
			0 & \text{otherwise}.
		\end{cases}
	\end{align*}
\end{lemma}


We use this result along with results of Wong \cite{wong}*{eq.\ (2)} pertaining to the case of level $1$.
\begin{proof}[Proof of Theorem \ref{thm:cinftyexplicit}]	
	We first reduce the calculation to the $N=1$ case. For $a \in \N$, define
	\begin{align*}
		\psi(a) \coloneqq \sum_{\substack{0 \leq b < 2a \\ b^2 \equiv DD_0 \pmod{4a}}} \chi_{D_{0}}\left[a,b,\frac{b^2-DD_0}{4a}\right],
	\end{align*}
	which, by virtue of \cite{wong}*{Proposition $1$}, is a multiplicative function of $a$. Lemma \ref{lem:levelreduction} enables us to write
	\begin{align*}
		\sum_{\substack{a \geq 1 \\ N \mid a}} \frac{\psi(a)}{a^k} 
		= \sum_{\ell \mid N} \mu(\ell) \sum_{a \geq 1} \chi_{\ell}(a)^2 \frac{\psi(a)}{a^k} = \sum_{\ell \mid N} \mu(\ell) \prod_{\substack{p \text{ prime} \\ p \nmid \ell}} \sum_{j \geq 0} \frac{\psi\left(p^j\right)}{p^{jk}}.
	\end{align*}
	Let $D = \widetilde{D}f_D^2$ with fundamental discriminant $\widetilde{D} > 0$. The inner sum is evaluated in Wong's paper \cite{wong}*{Proposition $2$}, which states (with $\nu_p$ the $p$-adic valuation) that
	\begin{multline*}
		\sum_{j\geq 0} \frac{\psi\left(p^j\right)}{p^{jk}} = \frac{1-p^{-2k}}{\left(1-\chi_{\widetilde{D}}(p)p^{-k}\right)\left(1-\chi_{D_{0}}(p)p^{-k}\right)} \\
		\times \frac{1}{\left(p^{\nu_p\left(f_D\right)}\right)^{2k-1}} \left(\sigma_{2k-1}\left(p^{\nu_p\left(f_D\right)}\right) - \chi_{\widetilde{D}}(p)p^{k-1}\sigma_{2k-1}\left(p^{\nu_p\left(f_D\right)-1}\right)\right),
	\end{multline*}
	where we adopt the usual convention $\sigma_{2k-1}(1/p) \coloneqq 0$. 
	
	The next step is to rewrite the divisor sums that arise. Since $p \nmid \ell$, we obtain
	\begin{multline*}
		\prod_{\substack{p \text{ prime} \\ p \nmid \ell}} \left(\sigma_{2k-1}\left(p^{\nu_p\left(f_D\right)}\right) - \chi_{\widetilde{D}}(p)p^{k-1}\sigma_{2k-1}\left(p^{\nu_p\left(f_D\right)-1}\right)\right) \\
		= \prod_{\substack{p \text{ prime} \\ p \mid f_D, \ p \nmid \ell}} \left(\sigma_{\ell, 2k-1}\left(p^{\nu_p\left(f_D\right)}\right) - \chi_{\widetilde{D}}(p)p^{k-1}\sigma_{\ell, 2k-1}\left(p^{\nu_p\left(f_D\right)-1}\right)\right).
	\end{multline*}
	Decomposing $f_D$ into prime powers, a standard argument using multiplicativity shows that
	\begin{multline*}
		\prod_{\substack{p \text{ prime} \\ p \mid f_D, \ p \nmid \ell}} \left(\sigma_{\ell,2k-1}\left(p^{\nu_p\left(f_D\right)}\right) - \chi_{\widetilde{D}}(p)p^{k-1}\sigma_{\ell,2k-1}\left(p^{\nu_p\left(f_D\right)-1}\right)\right) \\
		= 
		\sum_{\substack{r \mid f_D \\ \gcd(\ell,r)=1}} \mu(r) \chi_{\widetilde{D}}(r) r^{k-1}\sigma_{\ell,2k-1}\left(\frac{f_D}{r}\right).
	\end{multline*}
	Moreover, we have
		\begin{multline*}
			\Bigg(\prod_{\substack{p \text{ prime} \\ p \mid f_D, \ p \nmid \ell}} \frac{1}{\left(p^{\nu_p\left(f_D\right)}\right)^{2k-1}} \Bigg) \sum_{\substack{r \mid f_D \\ \gcd(\ell,r)=1}} \mu(r) \chi_{\widetilde{D}}(r) r^{k-1}\sigma_{\ell,2k-1}\left(\frac{f_D}{r}\right) \\
			= \sum_{\substack{r \mid f_D \\ \gcd(\ell,r)=1}} \mu(r) \chi_{\widetilde{D}}(r) r^{-k}\sigma_{\ell,1-2k}\left(\frac{f_D}{r}\right),
		\end{multline*}
		which is an adaptation of \cite{PeiWang}*{p.\ 114}. Combining yields
		\begin{multline*}
			\sum_{\substack{a \geq 1 \\ N \mid a}} \frac{1}{a^k} \sum_{\substack{0 \leq b < 2a \\ b^2 \equiv DD_0 \pmod{4a}}} \chi_{D_{0}}\left[a,b,\frac{b^2-DD_0}{4a}\right] \\
			= \sum_{\ell \mid N} \mu(\ell) \frac{L_{\ell}\left(k,\chi_{D_0}\right) L_{\ell}\left(k,\chi_{\widetilde{D}}\right)}{L_{\ell}\left(\id,2k\right)} \sum_{\substack{r \mid f_D \\ \gcd(\ell,r)=1}} \mu(r) \chi_{\widetilde{D}}(r) r^{-k}\sigma_{\ell,1-2k}\left(\frac{f_D}{r}\right),
		\end{multline*}
		which establishes the claim by definition of the numbers $H(1-k,1,\ell,\ell;n)$ (note that if $\kappa<1$ then $H(\kappa,1,N,N;n)$ is still well-defined by the functional equation of Dirichlet $L$-functions.).
\end{proof}

\section{Numerical examples}\label{Sec: examples}

We present some numerical examples to Theorem \ref{Thm: intro}. The underlying computer code can be found in the ancillary files to our arXiv submission. 

The finiteness of $\{Q \in \mathcal Q_{N,DD_0} \colon a<0<Q(x,1)\}$ for rational $x=p/q$ is shown by the identity \[Dq^2=\vt{bq+2ap}^2+4\vt{a}\cdot\vt{ap^2+bpq+cq^2}\] from \cite{zagier1} (note that \cite{jameson} corrected a typo there). The resulting bounds on $a,b$ are still much too large to use in our numerical examples. In order to prove convergence of similar sums to ours for irrational values of $x$, Bengoechea \cite{Bengoechea} provided (two) means via continued fractions to compute the forms in this set which dramatically speeds up convergence. The code employs simple forms hit with matrices depending on the continued fraction of $x$, which is directly based on the bijection in her Theorem~3.1.

\subsubsection{Weight \texorpdfstring{$4$}{4}, Level \texorpdfstring{$7$}{7}}
We note that Rosson and Tornar\'{\i}a \cite{rostor}*{Table 2} investigated this case as well. We have $\mathrm{dim}_{\C}\big(S_4(7)\big) = 1$, and the normalized Hecke newform is given by
\begin{align*}
f_1(\tau) = q - q^2 - 2 q^3 - 7 q^4 + 16 q^5 + 2 q^6 - 7 q^7 + 15 q^8 - 23 q^9 + O\left(q^{10}\right),
\end{align*}
see LMFDB \cite{lmfdb} newform orbit $7$.$4$.a.a. Moreover, it has sign $1$ under the Atkin--Lehner involution $W_7$. In this case, admissible discriminants are squares modulo $28$, non-squares themselves, satisfying $\big(\frac{D}{7}\big) = 1$, being coprime to the level and inducing a primitive Kronecker character. With Pari/GP \cite{pari}, we compute that
\begin{alignat*}{2}
& L\left(f_1 \otimes \left(\frac{29}{\cdot}\right),2\right) = 3.009928487\ldots, \qquad
&& L\left(f_1 \otimes \left(\frac{37}{\cdot}\right),2\right) = 0, \\
& L\left(f_1 \otimes \left(\frac{44}{\cdot}\right),2\right) = 1.610549804\ldots, 
&& L\left(f_1 \otimes \left(\frac{57}{\cdot}\right),2\right) = 0.273074344\ldots, \\
& L\left(f_1 \otimes \left(\frac{92}{\cdot}\right),2\right) = 0.
\end{alignat*}

Next, we show how to compute the values required in the cirterion of Theorem \ref{Thm: intro}. Since we have a one-dimensional space (and in particular no oldspace), there are no Hecke operators involved in this case. An example is given as follows.
\begin{lstlisting}[language=Sage, caption=Sage code for weight 4 and level 7, captionpos=b]
sage: attach('LocalPolyCodeFinal.sage')
sage: k=2
sage: N=7
sage: D0=29
sage: D=37
sage: Plocal(1/2)
144
\end{lstlisting}

The computation at other rational numbers is similar, and we obtain the values listed in Table \ref{table:lvl7table}. 
\begin{table}[htbp]
\centering
\begin{tabular}{| M{1.4cm} | M{2.5cm} M{2.5cm} M{2.5cm} M{2.5cm} |}
\hline
& & & & \\[-8pt]
$D_0=29$ & $D=37$ & $D=44$ & $D=57$ & $D=92$ \\ 
& & & & \\[-8pt]
\hline
& & & & \\[-8pt]
$x=\frac{1}{2}$ & $2^4 \cdot 3^2$ & $2^3\cdot3\cdot7$ & $2^2\cdot3\cdot5\cdot7$ & $2^6\cdot3^2 $ \\
& & & & \\[-8pt]
$x=\frac{1}{9}$ & $2^4 \cdot 3^2$ & $\frac{2^9\cdot5\cdot7}{3^4}$ & $\frac{2^3\cdot7\cdot569}{3^4}$ & $2^6\cdot3^2$ \\
& & & & \\[-8pt]
$x=\frac{1}{3}$ & $2^4 \cdot 3^2$ & $\frac{2^4\cdot5\cdot7}{3}$ & $\frac{2^4\cdot7\cdot11}{3}$ & $2^6\cdot3^2$ \\
& & & & \\[-8pt]
$x=\frac{1}{10}$ & $2^4 \cdot 3^2$ & $\frac{2^2\cdot7\cdot197}{5^2}$ & $\frac{2\cdot7\cdot19\cdot37 }{5^2}$ & $2^6\cdot3^2$ \\
& & & & \\[-8pt]
$x=\frac{1}{5}$ & $2^4 \cdot 3^2$ & $\frac{2^8\cdot3\cdot7}{5^2}$ & $\frac{2^3\cdot3\cdot7\cdot59}{5^2}$ & $2^6\cdot3^2$ \\
& & & & \\[-8pt]
$x=\frac{1}{12}$ & $2^4 \cdot 3^2$ & $\frac{2\cdot7\cdot11\cdot13}{3^2}$ & $\frac{5\cdot7\cdot101}{3^2}$ & $2^6\cdot3^2$ \\
& & & & \\
\hline
\end{tabular}
\caption{Numerical values pertaining to weight $4$ and level $7$.}
\label{table:lvl7table}
\end{table}
In particular, we observe that the values corresponding to some discriminant $D$ coincide for every $x$ listed if and only if $L\big(f_1 \otimes \big(\frac{D}{\cdot}\big),2\big)=0$.

\subsubsection{Weight \texorpdfstring{$4$}{4}, Level \texorpdfstring{$15$}{15}}
We have $\mathrm{dim}_{\C}\big(S_4(15)\big) = 4$, and consider the Hecke newform
\begin{align*}
f_2(\tau) = q + 3 q^2 - 3 q^3 + q^4 - 5 q^5 - 9 q^6 + 20 q^7 - 21 q^8 + 9 q^9 + O\left(q^{10}\right),
\end{align*}
see LMFDB \cite{lmfdb} newform orbit $15$.$4$.a.b. Moreover, it has sign $+1$ under both Atkin--Lehner involutions $W_3$ and $W_5$. In this case, admissible discriminants are squares modulo $60$, non-squares themselves, satisfying $\big(\frac{D}{3}\big) = \big(\frac{D}{5}\big) = 1$, being coprime to the level and inducing a primitive Kronecker character. With Pari/GP \cite{pari}, we compute that
\begin{alignat*}{2}
& L\left(f_2 \otimes \left(\frac{61}{\cdot}\right),2\right) = 0.378936801\ldots, \qquad
&& L\left(f_2 \otimes \left(\frac{76}{\cdot}\right),2\right) = 0.272484089\ldots, \\
& L\left(f_2 \otimes \left(\frac{109}{\cdot}\right),2\right) = 1.42778988\ldots, 
&& L\left(f_2 \otimes \left(\frac{124}{\cdot}\right),2\right) = 0.522984720\ldots,
\end{alignat*}
and that
\begin{align*}
L\left(f_2 \otimes \left(\frac{181}{\cdot}\right),2\right) = L\left(f_2 \otimes \left(\frac{229}{\cdot}\right),2\right) = L\left(f_2 \otimes \left(\frac{1009}{\cdot}\right),2\right) = 0.
\end{align*}

Next, we show how to compute the values required in the cirterion of Theorem \ref{Thm: intro}. As the dimension of $S_4(15)$ is greater than $1$, we require the action of Hecke operators to annihilate oldspaces and newforms being linearly independent from $f_2$. Note that the space $S_4(3)$ is trivial, the space $S_4(5)$ is one-dimensional, and there is precisely one normalized newform $g \in S_4(15)$ being untwisted and linearly independent from $f_2$. We choose the Hecke operator $T_{11}-32$ to annihilate the space $S_4(5)$ and the Hecke operator $T_7+24$ to annihilate $g$, where the primes $7$ and $11$ are coprime to the level. Consequently, our Hecke polynomial is given by
\begin{align*}
\left(T_{11}-11^{-3}\cdot 32\right) \cdot \left(T_7+7^{-3} \cdot 24\right) 
= T_{77}+7^{-3}\cdot24\cdot T_{11}-11^{-3}\cdot32\cdot T_7-768\cdot 77^{-3},
\end{align*}
and the action of this polynomial is encoded as the function ``Heckeaction15'' in our Sage code. Two examples are given as follows.
\begin{lstlisting}[language=Sage, caption=Sage code for weight 4 and level 15, captionpos=b]
sage: attach('LocalPolyCodeFinal.sage')
sage: k=2
sage: N=15
sage: D0=61
sage: D=181
sage: Heckeaction15(1/2)
100684800/41503
sage: D=1009
sage: Heckeaction15(1/2)
2236262400/41503
\end{lstlisting}

The computation at other rational numbers is similar, and we obtain the values listed in Table \ref{table:lvl15table}.
\begin{table}[htbp]
\centering
\begin{tabular}{| M{1.5cm} | M{2.5cm} M{2.5cm} M{2.5cm} |}
\hline
& & & \\[-8pt]
$D_0=61$ & $D=76$ & $D=109$ & $D=124$ \\ 
& & & \\[-8pt]
\hline
& & & \\[-8pt]
$x=\frac{1}{2}$ & $\frac{2^8\cdot3^2\cdot2081}{7^2\cdot11^2}$ & $\frac{2^8\cdot3^2\cdot127\cdot163}{7^3\cdot11^2}$ & $\frac{2^9\cdot3^3\cdot19\cdot269}{7^3\cdot11^2 }$ \\
& & & \\[-8pt]
$x=\frac{1}{17}$ & $\frac{2^9\cdot3^2\cdot53\cdot151\cdot263}{7^3\cdot11^2\cdot17^2}$ & $\frac{2^9\cdot3^2\cdot271897}{7^3\cdot11\cdot17^2}$ & $\frac{2^{10}\cdot3^3\cdot19\cdot38873}{7^3\cdot11^2\cdot17^2}$  \\
& & & \\[-8pt]
$x=\frac{1}{3}$ & $\frac{2^{13}\cdot17\cdot241}{7^3\cdot11^2}$ & $\frac{2^{11}\cdot3\cdot1109}{7^2\cdot11^2}$ & $\frac{2^{11}\cdot34499 }{7^3\cdot11^2}$ \\
& & & \\[-8pt]
$x=\frac{1}{18}$ & $\frac{2^7\cdot181\cdot13037}{3^2\cdot7^3\cdot11^2}$ & $\frac{2^7\cdot159671}{3\cdot7^2\cdot11^2}$ & $\frac{2^8\cdot61\cdot193\cdot211 }{3^2\cdot7^3\cdot11^2}$ \\
& & & \\[-8pt]
$x=\frac{1}{5}$ & $\frac{2^{10}\cdot3\cdot5^2\cdot19\cdot23 }{7^3\cdot11^2}$ & $\frac{2^{10}\cdot3^4\cdot5^2\cdot23 }{7^3\cdot11^2}$ & $\frac{2^{13}\cdot3\cdot5^3\cdot23 }{7^3\cdot11^2}$ \\
& & & \\[-8pt]
$x=\frac{1}{19}$ & $\frac{2^5\cdot3\cdot5\cdot139\cdot503}{7^3\cdot11^2}$ & $\frac{2^5\cdot3^3\cdot5\cdot19\cdot83}{7^2\cdot11^2}$ & $\frac{2^6\cdot3\cdot5\cdot89\cdot827}{7^3\cdot11^2}$ \\
& & & \\[-8pt]
\hline
\hline
& & & \\[-8pt]
$D_0=61$ & $D=181$ & $D=229$ & $D=1009$ \\ 
& & & \\[-8pt]
\hline
& & & \\[-8pt]
$x=\frac{1}{2}$ & $\frac{2^{10}\cdot3^2\cdot5^2\cdot19\cdot23}{7^3\cdot11^2}$ & $\frac{2^{10}\cdot3^5\cdot5^2\cdot23}{7^3\cdot11^2}$ & $\frac{2^{11}\cdot3^2\cdot5^2\cdot23\cdot211 }{7^3\cdot11^2}$ \\
& & & \\[-8pt]
$x=\frac{1}{17}$ & $\frac{2^{10}\cdot3^2\cdot5^2\cdot19\cdot23 }{7^3\cdot11^2}$ & $\frac{2^{10}\cdot3^5\cdot5^2\cdot23}{7^3\cdot11^2}$ & $\frac{2^{11}\cdot3^2\cdot5^2\cdot23\cdot211}{7^3\cdot11^2}$ \\
& & & \\[-8pt]
$x=\frac{1}{3}$ & $\frac{2^{10}\cdot3^2\cdot5^2\cdot19\cdot23}{7^3\cdot11^2}$ & $\frac{2^{10}\cdot3^5\cdot5^2\cdot23}{7^3\cdot11^2}$ & $\frac{2^{11}\cdot3^2\cdot5^2\cdot23\cdot211 }{7^3\cdot11^2}$ \\
& & & \\[-8pt]
$x=\frac{1}{18}$ & $\frac{2^{10}\cdot3^2\cdot5^2\cdot19\cdot23}{7^3\cdot11^2}$ & $\frac{2^{10}\cdot3^5\cdot5^2\cdot23}{7^3\cdot11^2}$ & $\frac{2^{11}\cdot3^2\cdot5^2\cdot23\cdot211}{7^3\cdot11^2}$ \\
& & & \\[-8pt]
$x=\frac{1}{5}$ & $\frac{2^{10}\cdot3^2\cdot5^2\cdot19\cdot23 }{7^3\cdot11^2}$ & $\frac{2^{10}\cdot3^5\cdot5^2\cdot23 }{7^3\cdot11^2}$ & $\frac{2^{11}\cdot3^2\cdot5^2\cdot23\cdot211 }{7^3\cdot11^2}$ \\
& & & \\[-8pt]
$x=\frac{1}{19}$ & $\frac{2^{10}\cdot3^2\cdot5^2\cdot19\cdot23}{7^3\cdot11^2}$ & $\frac{2^{10}\cdot3^5\cdot5^2\cdot23}{7^3\cdot11^2}$ & $\frac{2^{11}\cdot3^2\cdot5^2\cdot23\cdot211}{7^3\cdot11^2}$ \\
& & & \\
\hline
\end{tabular}
\caption{Numerical values pertaining to weight $4$ and level $15$.}
\label{table:lvl15table}
\end{table}
In particular, we observe that the values corresponding to some discriminant $D$ coincide for every $x$ listed if and only if $L\big(f_2 \otimes \big(\frac{D}{\cdot}\big),2\big)=0$.

\subsubsection{Weight \texorpdfstring{$4$}{4}, Level \texorpdfstring{$22$}{22}}
We have $\mathrm{dim}_{\C}\big(S_4(22)\big) = 7$, and consider the Hecke newform
\begin{align*}
F(z) = q - 2 q^2 + 4 q^3 + 4 q^4 + 14 q^5 - 8 q^6 - 8 q^7 - 8 q^8 - 11 q^9 + O\left(q^{10}\right),
\end{align*}
see LMFDB \cite{lmfdb} newform orbit $22$.$4$.a.b. Moreover, it has sign $+1$ under both Atkin--Lehner involutions $W_2$ and $W_{11}$. In this case, admissible discriminants are squares modulo $88$, non-squares themselves, satisfying $\big(\frac{D}{2}\big) = \big(\frac{D}{11}\big) = 1$, being coprime to $22$, and inducing a primitive Kronecker character. With Pari/GP \cite{pari}, we compute that
\begin{alignat*}{2}
& L\left(F \otimes \left(\frac{89}{\cdot}\right),2\right) = 2.956416940\ldots, \qquad
&& L\left(F \otimes \left(\frac{97}{\cdot}\right),2\right) = 1.154810097\ldots, \\
& L\left(F \otimes \left(\frac{113}{\cdot}\right),2\right) = 0.057402462\ldots, 
&& L\left(F \otimes \left(\frac{1985}{\cdot}\right),2\right) = L\left(F \otimes \left(\frac{2337}{\cdot}\right),2\right) = 0.
\end{alignat*}

Next, we show how to compute the values required in the cirterion of Theorem \ref{Thm: intro}. As the dimension of $S_4(22)$ is greater than $1$, we require the action of Hecke operators to annihilate oldspaces and newforms being linearly independent from $F$. Note that the space $S_4(2)$ is trivial, the space $S_4(11)$ is two-dimensional, and there are two normalized newforms in $S_4(22)$ being untwisted and linearly independent from $F$. We choose the Hecke polynomial
\begin{align*}
T_{13}^2-80T_{13}+400 = \left(T_{13}-20\sqrt{3}-40\right)\cdot\left(T_{13}+20\sqrt{3}-40\right)
\end{align*}
to annihilate the space $S_4(11)$ and the Hecke operators $T_3+7$ resp.\ $T_5+3$ to annihilate the other two newforms, where the primes $3$, $5$ and $13$ are coprime to the level. Consequently, our Hecke polynomial is given by
\begin{align*}
&\left(T_{13}-13^{-3}\left(20\sqrt{3}+40\right)\right)\cdot\left(T_{13}+13^{-3}\left(20\sqrt{3}-40\right)\right)\cdot\left(T_3+3^{-3}\cdot7\right)\cdot\left(T_5+5^{-3}\cdot3\right) \\
& \hspace{0.5cm} = T_{195}T_{13} + 7\cdot 3^{-2}\cdot 5^{-3}\cdot T_{13}^2 + 3\cdot 5^{-3}\cdot T_{39}T_{13} + 7\cdot 3^{-3} \cdot T_{65}T_{13} - 112\cdot 13^{-3} \cdot 15^{-2} \cdot T_{13} \\
&\hspace{1cm} - 48 \cdot 5^{-2} \cdot 13^{-3} \cdot T_{39} - 560 \cdot 39^{-3} \cdot T_{65} - 80 \cdot 13^{-3} \cdot T_{195} + 48 \cdot 5^{-1} \cdot 13^{-6} \cdot T_3 \\
&\hspace{1cm} + 2800 \cdot 3^{-3} \cdot 13^{-6} \cdot T_5 + 400 \cdot 13^{-6} \cdot T_{15} + 112\cdot 3^{-2}\cdot 5^{-1}\cdot 13^{-6}
\end{align*}
and the action of this polynomial is encoded as the function ``Heckeaction22'' in our SAGE \cite{sage} code. One example is given as follows.
\begin{lstlisting}[language=Sage, caption=Sage code for weight 4 and level 22, captionpos=b]
sage: attach('LocalPolyCodeFinal.sage')
sage: k=2
sage: N=22
sage: D0=89
sage: D=1985
sage: Heckeaction22(1/2)
4105093056512/27846975
\end{lstlisting}

The computation at other rational numbers is similar, and we obtain the values listed in Table \ref{table:lvl22table}.
\begin{table}[htbp]
\centering
\begin{tabular}{| M{1.4cm} | M{3cm} M{3cm} M{3cm} M{3cm} |}
\hline
& & & & \\[-8pt]
$D_0=89$ & $D=97$ & $D=113$ & $D=1985$ & $D=2337$ \\ 
& & & & \\[-8pt]
\hline
& & & & \\[-8pt]
$x=\frac{1}{2}$ & $\frac{2^{10}\cdot17\cdot23^2\cdot70571}{3\cdot5^2\cdot13^6}$ & $\frac{2^7\cdot1791933937 }{5^2\cdot13^6 }$ & $\frac{2^{11}\cdot7\cdot43\cdot349\cdot19081}{3\cdot5^2\cdot13^5}$ & $\frac{2^{14}\cdot7\cdot19\cdot43\cdot19081}{5^2\cdot13^5}$ \\
& & & & \\[-8pt]
$x=\frac{1}{24}$ & $\frac{2^5\cdot17\cdot10751593667}{3^3\cdot5^2\cdot13^6 }$ & $\frac{2^3\cdot210173\cdot3683233 }{3^3\cdot5^2\cdot13^6}$ & $\frac{2^{11}\cdot7\cdot43\cdot349\cdot19081}{3\cdot5^2\cdot13^5 }$ & $\frac{2^{14}\cdot7\cdot19\cdot43\cdot19081 }{5^2\cdot13^5}$ \\
& & & & \\[-8pt]
$x=\frac{1}{3}$ & $\frac{2^9\cdot17\cdot19\cdot29\cdot37\cdot10987 }{3^2\cdot5^2\cdot13^6 }$ & $\frac{2^8\cdot8063705989 }{3^2\cdot5^2\cdot13^6}$ & $\frac{2^{11}\cdot7\cdot43\cdot349\cdot19081}{3\cdot5^2\cdot13^5}$ & $\frac{2^{14}\cdot7\cdot19\cdot43\cdot19081}{5^2\cdot13^5}$ \\
& & & & \\[-8pt]
$x=\frac{1}{25}$ & $\frac{2^9\cdot17\cdot89\cdot104864951 }{3\cdot5^5\cdot13^6 }$ & $\frac{2^8\cdot19\cdot89\cdot103\cdot1929047 }{3\cdot5^5\cdot13^6 }$ & $\frac{2^{11}\cdot7\cdot43\cdot349\cdot19081 }{3\cdot5^2\cdot13^5 }$ & $\frac{2^{14}\cdot7\cdot19\cdot43\cdot19081 }{5^2\cdot13^5}$ \\
& & & & \\[-8pt]
$x=\frac{1}{5}$ & $\frac{2^9\cdot17\cdot2208991 }{3\cdot5^3\cdot13^4}$ & $\frac{2^8\cdot1033808767 }{3\cdot5^3\cdot13^5 }$ & $\frac{2^{11}\cdot7\cdot43\cdot349\cdot19081 }{3\cdot5^2\cdot13^5 }$ & $\frac{2^{14}\cdot7\cdot19\cdot43\cdot19081 }{5^2\cdot13^5 }$ \\
& & & & \\[-8pt]
$x=\frac{1}{27}$ & $\frac{2^9\cdot17\cdot3739\cdot4852451 }{3^6\cdot5^2\cdot13^6 }$ & $\frac{2^8\cdot29\cdot211\cdot1667\cdot64033 }{3^6\cdot5^2\cdot13^6 }$ & $\frac{2^{11}\cdot7\cdot43\cdot349\cdot19081 }{3\cdot5^2\cdot13^5 }$ & $\frac{2^{14}\cdot7\cdot19\cdot43\cdot19081 }{5^2\cdot13^5 }$ \\
& & & & \\
\hline
\end{tabular}
\caption{Numerical values pertaining to weight $4$ and level $22$.}
\label{table:lvl22table}
\end{table}
In particular, we observe that the values corresponding to some discriminant $D$ coincide for every $x$ listed if and only if $L\big(F \otimes \big(\frac{D}{\cdot}\big),2\big)=0$.

\section{Questions for future work}
\label{FutureWork}
We conclude with a few questions for future work.

\begin{enumerate}[leftmargin=*]
\item Skoruppa \cite{skoruppa} used skew-holomorphic Jacobi forms to obtain a similar condition to Theorem \ref{MainCongNumThm} for fundamental discriminants $D \equiv 1\pmod{8}$. One can use locally harmonic Maass forms with a modification of the genus character to prove an equivalent formula. To the authors' knowledge, Skoruppa's method crucially uses that $D$ is a quadratic residue modulo $8$. It would be interesting to extend his theory to other discriminants with a suitable generalization or to tie that theory directly to the local polynomials in this paper.
\item It would be interesting to try to numerically optimize our methods discussed in Section~\ref{Sec: examples}. We have not attempted to do so here. Further, the use of Atkin--Lehner involutions as well as Hecke operators may dramatically speed up computations. See for example \cite{ComputingMFs} for a discussion of computing for the LMFDB \cite{lmfdb} using Atkin--Lehner eigenspaces. Example 4.5.1 therein describes a decomposition of a 159-dimensional space of cusp forms into eigenspaces with dimensions ranging from 1 to 29. The advantages of using Atkin--Lehner involutions would be that they involve plugging in fewer rational points, and many fewer operators would be required.
\item It would be interesting to further explore the arithmetic consequences of our formulas. For instance, is there a higher-dimensional analogue of Monsky-style results such as that at the end of Section~\ref{PreviousWorkSection}? Is there a general interpretation using hyperbolic geometry (as Genz used) or Selmer groups that can ``witness'' vanishing or non-vanishing of twisted central $L$-values?
\end{enumerate}

\begin{bibsection}
\begin{biblist}
\bib{antoniadis}{article}{
   author={Antoniadis, J. A.},
   title={Modulformen auf $\Gamma_0(N)$ mit rationalen Perioden},
   language={German, with English summary},
   journal={Manuscripta Math.},
   volume={74},
   date={1992},
   number={4},
   pages={359--384},
}

\bib{bemo}{webpage}{
   author={Beckwith, O.},
   author={Mono, A.},
   title={A modular framework for generalized Hurwitz class numbers I},
   url={https://arxiv.org/abs/2403.17829},
   year={2024},
   note={Accepted for publication in Transactions of the AMS},
}

\bib{bemo2}{webpage}{
   author={Beckwith, O.},
   author={Mono, A.},
   title={A modular framework for generalized Hurwitz class numbers II},
   year={2024},
   url={https://arxiv.org/abs/2411.07962},
   note={preprint},
}
	
\bib{Bengoechea}{article}{
	author={Bengoechea, P.},
	title={From quadratic polynomials and continued fractions to modular
		forms},
	journal={J. Number Theory},
	volume={147},
	date={2015},
	pages={24--43},
}

\bib{beng}{webpage}{
	author={Bengoechea, P.},
	title={Irreducibility and Galois group of Hecke polynomials},
	url={https://arxiv.org/abs/1703.02840v3},
	year={2022},
}

\bib{ComputingMFs}{inproceedings}{
	author={Best, A.J.},
	author={Bober, J.},
	author={Booker, A.R.},
	author={Costa, E.},
	author={Cremona, J.E.},
	author={Derickx, M.},
	author={Lee, M.},
	author={Lowry-Duda, D.},
	author={Roe, D.},
	author={Sutherland, A.V.},
	author={Voight, J.},
	editor={Balakrishnan, J.S.},
	editor={Elkies, N.},
	editor={Hassett, B.},
	editor={Poonen, B.},
	editor={Sutherland, A.V.},
	editor={Voight, J.},
	title={Computing Classical Modular Forms},
	booktitle={Arithmetic Geometry, Number Theory, and Computation},
        year={2021},
        publisher={Springer International Publishing},
        address={Charm},
	pages={131--213},
}

\bib{thebook}{book}{
   author={Bringmann, K.},
   author={Folsom, A.},
   author={Ono, K.},
   author={Rolen, L.},
   title={Harmonic Maass forms and mock modular forms: theory and
   applications},
   series={American Mathematical Society Colloquium Publications},
   volume={64},
   publisher={American Mathematical Society, Providence, RI},
   date={2017},
   pages={xv+391},
}

\bib{BK}{article}{
	author={Bringmann, K.},
	author={Kane, B.},
	title={Modular local polynomials},
	journal={Math. Res. Lett.},
	volume={23},
	date={2016},
	number={4},
	pages={973--987},
}

\bib{bkk}{article}{
   author={Bringmann, K.},
   author={Kane, B.},
   author={Kohnen, W.},
   title={Locally harmonic Maass forms and the kernel of the Shintani lift},
   journal={Int. Math. Res. Not. IMRN},
   date={2015},
   number={11},
   pages={3185--3224},
}

\bib{brimo1}{webpage}{
   author={Bringmann, K.},
   author={Mono, A.},
   title={A modular framework of functions of Knopp and indefinite binary quadratic forms},
   note={Accepted for publication in Communications in Contemporary Mathematics},
   url={https://www.worldscientific.com/doi/10.1142/S0219199725500920},
}

\bib{bmr}{article}{
   author={Bringmann, K.},
   author={Mono, A.},
   author={Rolen, L.},
   title={Flipping operators and locally harmonic Maass forms},
   journal={Ramanujan J.},
   volume={68},
   date={2025},
   number={2},
   pages={40},
}

\bib{BruinierFunke}{article}{
	author={Bruinier, J. H.},
	author={Funke, J.},
	title={On two geometric theta lifts},
	journal={Duke Math. J.},
	volume={125},
	date={2004},
	number={1},
	pages={45--90},
}

\bib{BruOnoAnnals}{article}{
	author={Bruinier, J. H.},
	author={Ono, K.},
	title={Heegner divisors, $L$-functions and harmonic weak Maass forms},
	journal={Ann. of Math. (2)},
	volume={172},
	date={2010},
	number={3},
	pages={2135--2181},
}

\bib{BOR}{article}{
   author={Bruinier, J. H.},
   author={Ono, Ken},
   author={Rhoades, Robert C.},
   title={Differential operators for harmonic weak Maass forms and the
   vanishing of Hecke eigenvalues},
   journal={Math. Ann.},
   volume={342},
   date={2008},
   number={3},
   pages={673--693},
}

\bib{CKL}{article}{
   author={Choi, D.},
   author={Kim, B.},
   author={Lim, S.},
   title={Eichler integrals and harmonic weak Maass forms},
   journal={J. Math. Anal. Appl.},
   volume={411},
   date={2014},
   number={1},
   pages={429--441},
}

\bib{coh75}{article}{
   author={Cohen, H.},
   title={Sums involving the values at negative integers of $L$-functions of
   quadratic characters},
   journal={Math. Ann.},
   volume={217},
   date={1975},
   number={3},
   pages={271--285},
}

\bib{cohenstromberg}{book}{
   author={Cohen, H.},
   author={Str\"{o}mberg, F.},
   title={Modular forms},
   series={Graduate Studies in Mathematics},
   volume={179},
   note={A classical approach},
   publisher={American Mathematical Society, Providence, RI},
   date={2017},
}

\bib{diamantis}{article}{
   author={Diamantis, N.},
   title={Hecke operators and derivatives of $L$-functions},
   journal={Compositio Math.},
   volume={125},
   date={2001},
   number={1},
   pages={39--54},
}

\bib{diashu}{book}{
	author={Diamond, F.},
	author={Shurman, J.},
	title={A first course in modular forms},
	series={Graduate Texts in Mathematics},
	volume={228},
	publisher={Springer-Verlag, New York},
	date={2005},
	pages={xvi+436},
}

\bib{dombr26thesis}{thesis}{
   author={Dombrowsky, C. K. L.},
   title={Central values of L-functions of twisted modular forms of composite level},
   type={Ph.D. Thesis},
   organization={Universiteit Leiden},
   date={2026},
}

\bib{DIT}{article}{
	author={Duke, W.},
	author={Imamo\={g}lu, \"{O}.},
	author={T\'{o}th, \'{A}.},
	title={Cycle integrals of the $j$-function and mock modular forms},
	journal={Ann. of Math. (2)},
	volume={173},
	date={2011},
	number={2},
	pages={947--981},
}

\bib{duimto10}{article}{
   author={Duke, W.},
   author={Imamo\={g}lu, \"{O}.},
   author={T\'{o}th, \'{A}.},
   title={Rational period functions and cycle integrals},
   journal={Abh. Math. Semin. Univ. Hambg.},
   volume={80},
   date={2010},
   number={2},
   pages={255--264},
}

\bib{egkr}{article}{
   author={Ehlen, S.},
   author={Guerzhoy, P.},
   author={Kane, B.},
   author={Rolen, L.},
   title={Central $L$-values of elliptic curves and local polynomials},
   journal={Proc. Lond. Math. Soc. (3)},
   volume={120},
   date={2020},
   number={5},
   pages={742--769},
}

\bib{table}{book}{
    AUTHOR = {Gradshteyn, I. S.},
    AUTHOR = {Ryzhik, I. M.},
     TITLE = {Table of integrals, series, and products},
   EDITION = {Seventh},
      NOTE = {Translated from the Russian,
              Translation edited and with a preface by Alan Jeffrey and
              Daniel Zwillinger,
              With one CD-ROM (Windows, Macintosh and UNIX)},
 PUBLISHER = {Elsevier/Academic Press, Amsterdam},
      YEAR = {2007},
     PAGES = {xlviii+1171},
}

\bib{grokoza}{article}{
   author={Gross, B.},
   author={Kohnen, W.},
   author={Zagier, D.},
   title={Heegner points and derivatives of $L$-series. II},
   journal={Math. Ann.},
   volume={278},
   date={1987},
   number={1-4},
   pages={497--562},
}

\bib{hoevel}{thesis}{
		title={Automorphe {F}ormen mit {S}ingularit{\"a}ten auf dem hyperbolischen {R}aum},
		author={M.~H\"ovel},
		note={ TU Darmstadt PhD Thesis},
		year={2012}
}

\bib{ilt22}{article}{
   author={Imamo\u{g}lu, \"{O}.},
   author={L\"{a}geler, A.},
   author={T\'{o}th, \'{A}.},
   title={The Katok-Sarnak formula for higher weights},
   journal={J. Number Theory},
   volume={235},
   date={2022},
   pages={242--274},
}

\bib{IOS}{article}{
	author={Imamo\={g}lu, \"{O}.},
	author={O'Sullivan, C.},
	title={Parabolic, hyperbolic and elliptic Poincar\'{e} series},
	journal={Acta Arith.},
	volume={139},
	date={2009},
	number={3},
	pages={199--228},
}

\bib{IK}{book}{
	author={Iwaniec, H.},
	author={Kowalski, E.},
	title={Analytic number theory},
	series={American Mathematical Society Colloquium Publications},
	volume={53},
	publisher={American Mathematical Society, Providence, RI},
	date={2004},
	pages={xii+615},
}

\bib{jameson}{article}{
	author={Jameson, M.},
	title={A problem of Zagier on quadratic polynomials and continued
		fractions},
	journal={Int. J. Number Theory},
	volume={12},
	date={2016},
	number={1},
	pages={121--141},
}

\bib{katok}{article}{
	author={Katok, S.},
	title={Closed geodesics, periods and arithmetic of modular forms},
	journal={Invent. Math.},
	volume={80},
	date={1985},
	number={3},
	pages={469--480},
}

\bib{knopp62}{article}{
   author={Knopp, M. I.},
   title={Construction of automorphic forms on $H$-groups and supplementary
   Fourier series},
   journal={Trans. Amer. Math. Soc.},
   volume={103},
   date={1962},
   pages={168--188},
}

\bib{Kohnen}{article}{
	AUTHOR = {Kohnen, W.},
	TITLE = {Fourier coefficients of modular forms of half-integral weight},
	JOURNAL = {Math. Ann.},
	VOLUME = {271},
	YEAR = {1985},
	NUMBER = {2},
	PAGES = {237--268},
}

\bib{koza81}{article}{
   author={Kohnen, W.},
   author={Zagier, D.},
   title={Values of $L$-series of modular forms at the center of the
   critical strip},
   journal={Invent. Math.},
   volume={64},
   date={1981},
   number={2},
   pages={175--198},
}

\bib{koza84}{article}{
   author={Kohnen, W.},
   author={Zagier, D.},
   title={Modular forms with rational periods},
   conference={
      title={Modular forms},
      address={Durham},
      date={1983},
   },
   book={
      series={Ellis Horwood Ser. Math. Appl.: Statist. Oper. Res.},
      publisher={Horwood, Chichester},
   },
   date={1984},
   pages={197--249},
}

\bib{kongthesis}{thesis}{
   author={Kong Kar Lun},
   title={Modular local polynomial and vanishing of $L$-values},
   type={Ph.D. Thesis},
   organization={University of Hong Kong},
   date={2017},
}

\bib{mamo}{article}{
	author={Males, J.},
	author={Mono, A.},
	title={Local Maass forms and Eichler--Selberg relations for
		negative-weight vector-valued mock modular forms},
	journal={Pacific J. Math.},
	volume={322},
	date={2023},
	number={2},
	pages={381--406},
}

\bib{mamo2}{article}{
	author={Males, J.},
	author={Mono, A.},
	title={Correction to the article Local Maass forms and Eichler--Selberg relations for
		negative-weight vector-valued mock modular forms},
	journal={Pacific J. Math.},
	volume={332},
	date={2024},
	number={2},
	pages={395--396},
}

\bib{mat23}{article}{
   author={Matsusaka, T.},
   title={A hyperbolic analogue of the Rademacher symbol},
   journal={Math. Ann.},
   volume={388},
   date={2024},
   number={3},
   pages={2843--2886},
}

\bib{miyake}{book}{
   author={Miyake, T.},
   title={Modular forms},
   publisher={Springer-Verlag, Berlin},
   date={1989},
}

\bib{mo1}{article}{
	author={Mono, A.},
	title={Eisenstein series of even weight $k\geq2$ and integral binary
		quadratic forms},
	journal={Proc. Amer. Math. Soc.},
	volume={150},
	date={2022},
	number={5},
	pages={1889--1902},
}

\bib{mo2}{article}{
   author={Mono, A.},
   title={Locally harmonic Maass forms of positive even weight},
   journal={Israel J. Math.},
   volume={261},
   date={2024},
   number={2},
   pages={671--694},
}

\bib{mo3}{article}{
   author={Mono, A.},
   title={A modular framework for generalized Hurwitz class numbers III},
   journal={J. Math. Anal. Appl.},
   volume={559},
   date={2026},
   number={1},
   pages={Paper No. 130398},
}

\bib{Monsky}{article}{
	author={Monsky, P.},
	title={Mock Heegner points and congruent numbers},
	journal={Math. Z.},
	volume={204},
	date={1990},
	number={1},
	pages={45--67},
}

\bib{papo}{article}{
   author={Pa\c sol, V.},
   author={Popa, A.},
   title={Modular forms and period polynomials},
   journal={Proc. Lond. Math. Soc. (3)},
   volume={107},
   date={2013},
   number={4},
   pages={713--743},
}

\bib{parson}{article}{
   author={Parson, L. A.},
   title={Modular integrals and indefinite binary quadratic forms},
   conference={
      title={A tribute to Emil Grosswald: number theory and related
      analysis},
   },
   book={
      series={Contemp. Math.},
      volume={143},
      publisher={Amer. Math. Soc., Providence, RI},
   },
   date={1993},
   pages={513--523},
}

\bib{PeiWang}{article}{
   author={Pei, D.},
   author={Wang, X.},
   title={A generalization of Cohen-Eisenstein series and Shimura liftings
   and some congruences between cusp forms and Eisenstein series},
   journal={Abh. Math. Sem. Univ. Hamburg},
   volume={73},
   date={2003},
   pages={99--130},
}

\bib{rostor}{article}{
   author={Rosson, H.},
   author={Tornar\'{\i}a, G.},
   title={Central values of quadratic twists for a modular form of weight 4},
   conference={
      title={Ranks of elliptic curves and random matrix theory},
   },
   book={
      series={London Math. Soc. Lecture Note Ser.},
      volume={341},
      publisher={Cambridge Univ. Press, Cambridge},
   },
   date={2007},
   pages={315--321},
}

\bib{sataka05}{article}{
   author={Sakata, H.},
   title={On the Kohnen-Zagier formula in the case of level $4p^m$},
   journal={Math. Z.},
   volume={250},
   date={2005},
   number={2},
   pages={257--266},
}

\bib{sataka08}{article}{
   author={Sakata, H.},
   title={On the Kohnen-Zagier formula in the case of `4 $\times$ general odd' level},
   journal={Nagoya Math. J.},
   volume={190},
   date={2008},
   pages={63--85},
}

\bib{schw18}{thesis}{
   author={Schwagenscheidt, M.},
   title={Regularized Theta Lifts of Harmonic Maass Forms},
   type={Ph.D. Thesis},
   organization={TU Darmstadt},
   date={2018},
}

\bib{shim}{article}{
   author={Shimura, G.},
   title={On modular forms of half integral weight},
   journal={Ann. of Math. (2)},
   volume={97},
   date={1973},
   pages={440--481},
}

\bib{shin}{article}{
   author={Shintani, T.},
   title={On construction of holomorphic cusp forms of half integral weight},
   journal={Nagoya Math. J.},
   volume={58},
   date={1975},
   pages={83--126},
}

\bib{skoruppa}{article}{
   author={Skoruppa, N.-P.},
   title={Heegner cycles, modular forms and Jacobi forms},
   journal={S\'{e}m. Th\'{e}or. Nombres Bordeaux (2)},
   volume={3},
   date={1991},
   number={1},
   pages={93--116},
}

\bib{tunnell}{article}{
	author={Tunnell, J. B.},
	title={A classical Diophantine problem and modular forms of weight $3/2$},
	journal={Invent. Math.},
	volume={72},
	date={1983},
	number={2},
	pages={323--334},
}

\bib{ueda88}{article}{
   author={Ueda, M.},
   title={The decomposition of the spaces of cusp forms of half-integral
   weight and trace formula of Hecke operators},
   journal={J. Math. Kyoto Univ.},
   volume={28},
   date={1988},
   number={3},
   pages={505--555},
}

\bib{ueda93}{article}{
   author={Ueda, M.},
   title={On twisting operators and newforms of half-integral weight},
   journal={Nagoya Math. J.},
   volume={131},
   date={1993},
   pages={135--205},
}

\bib{ueda98}{article}{
   author={Ueda, M.},
   title={On twisting operators and newforms of half-integral weight. II.
   Complete theory of newforms for Kohnen space},
   journal={Nagoya Math. J.},
   volume={149},
   date={1998},
   pages={117--171},
}

\bib{ueya}{article}{
   author={Ueda, M.},
   author={Yamana, S.},
   title={On newforms for Kohnen plus spaces},
   journal={Math. Z.},
   volume={264},
   date={2010},
   number={1},
   pages={1--13},
}

\bib{waldspurger}{article}{
	author={Waldspurger, J.-L.},
	title={Sur les coefficients de Fourier des formes modulaires de poids
		demi-entier},
	language={French},
	journal={J. Math. Pures Appl. (9)},
	volume={60},
	date={1981},
	number={4},
	pages={375--484},
}

\bib{wong}{article}{
   author={Wong, K. L.},
   title={Sums of quadratic functions with two discriminants},
   journal={J. Number Theory},
   volume={192},
   date={2018},
   pages={181--196},
}

\bib{zagier1}{article}{
   author={Zagier, D.},
   title={From quadratic functions to modular functions},
   conference={
      title={Number theory in progress, Vol. 2},
      address={Zakopane-Ko\'{s}cielisko},
      date={1997},
   },
   book={
      publisher={de Gruyter, Berlin},
   },
   date={1999},
   pages={1147--1178},
}

\bib{zagier2}{article}{
   author={Zagier, D.},
   title={Eisenstein series and the Riemann zeta function},
   conference={
      title={Automorphic forms, representation theory and arithmetic
      (Bombay, 1979)},
   },
   book={
      series={Tata Inst. Fund. Res. Studies in Math.},
      volume={10},
      publisher={Tata Institute of Fundamental Research, Bombay},
   },
   date={1981},
   pages={275--301},
}

\bib{zagier75}{article}{
   author={Zagier, D.},
   title={Modular forms associated to real quadratic fields},
   journal={Invent. Math.},
   volume={30},
   date={1975},
   number={1},
   pages={1--46},
}

\bib{lmfdb}{misc}{
  author = {The {LMFDB Collaboration}},
  title = {The {L}-functions and modular forms database},
  note = {\url{https://www.lmfdb.org}},
  year= {2023},
}

\bib{sage}{misc}{
  author = {W. A. Stein et al.},
  title = {Sage Mathematics Software},
  note = {The Sage Development Team, Version 9.3, \url{https://www.sagemath.org/}},
  year= {2022},
}

\bib{pari}{misc}{
  author = {The PARI~Group},
  title = {PARI/GP version \texttt{2.15.2}},
  note = {Univ. Bordeaux, available from \url{http://pari.math.u-bordeaux.fr/}},
  year= {2022},
}
\end{biblist}
\end{bibsection}

\end{document}